\documentclass[leqno, 11pts]{article}


\usepackage{graphicx}

\usepackage{stmaryrd}
\usepackage{amsmath,amssymb,amsfonts}
\usepackage{times}
\usepackage[T1]{fontenc}
\usepackage{graphicx}
\usepackage[english]{babel}
\usepackage{amsmath}
\usepackage{amsthm}
\usepackage{amssymb}
\usepackage{enumerate}
\usepackage{xspace}
\usepackage{euscript}
\usepackage{graphicx}
\usepackage{amscd}
\usepackage{epsfig}
\usepackage{tabularx}
\usepackage{color}
\usepackage{bm}
\usepackage{tikz} 


\newtheorem{theorem}{Theorem}[section]
\newtheorem{lemma}[theorem]{Lemma}

\theoremstyle{definition}
\newtheorem{definition}[theorem]{Definition}

\theoremstyle{remark}
\newtheorem{remark}[theorem]{Remark}

\numberwithin{equation}{section}

\begin{document}
\date{}

\title{Analysis of a finite element DtN method for scattering resonances of sound hard obstacles}

\author{Yingxia Xi \thanks{School of Mathematics and Statistics, Nanjing University of Science and Technology, Nanjing, 210094, China. ({\tt xiyingxia@njust.edu.cn})} 
\and Bo Gong \thanks{School of Mathematics, Statistics and Mechanics, Beijing University of Technology, Beijing, 100124, China. ({\tt  gongbo@bjut.edu.cn}).}
\and Jiguang Sun \thanks{Department of Mathematical Sciences, Michigan Technological University, Houghton, MI 49931, U.S.A. ({\tt  jiguangs@mtu.edu}).}}

\maketitle

\begin{abstract}
Scattering resonances have important applications in many areas of science and engineering. They are the replacement of discrete spectral data for problems on non-compact domains. In this paper, we consider the computation of scattering resonances defined on the exterior to a compact sound hard obstacle. The resonances are the eigenvalues of a holomorphic Fredholm operator function. We truncate the unbounded domain and impose the Dirichlet-to-Neumann (DtN) mapping. The problem is then discretized using the linear Lagrange element. Convergence of the resonances is proved using the abstract approximation theory for holomorphic Fredholm operator functions. The discretization leads to nonlinear algebraic eigenvalue problems, which are solved by the recently developed parallel spectral indicator methods. Numerical examples are presented for validation.

\end{abstract}

\section{Introduction}
Scattering resonances have important applications in many areas of science and engineering. They are the replacement of discrete spectral data for problems on non-compact domains. Different from eigenvalues which describe the energy of bound states, scattering resonances describe a decaying state in which energy can scatter to infinity \cite{SemyonMaciej,MaciejPhysicsGeometry}. Mathematically, scattering resonances are the poles of the meromorphic continuation of the scattering operator to the lower half-plane \cite{LaxPhillips1989}. Scattering poles carry physical information with the real parts being the rates of oscillations and imaginary parts the rates of decay. In this paper, we consider the numerical computation of acoustic scattering resonances, which play a key role in applications such as vibration and noise reduction, and design of acoustic functional devices and musical instruments \cite{Huang}.

Considerable efforts have been devoted to the analysis of scattering resonances. The results include the existence, upper and lower bound estimation of the counting function, and the distribution of scattering resonances (see the comprehensive
monograph \cite{SemyonMaciej}). There exist much fewer works on the numerical computation. Development of effective computational methods for scattering resonances inevitably faces several challenges.
\begin{itemize}
\item[(1)] Scattering resonances are the poles of the meromorphic continuation of the scattering operator to the lower half-plane. There are lack of tools for the direct computation of poles. (We are not distinguishing resonances and poles in this paper although their meanings can be different in other context.)

\item[(2)] The scattering problem is defined on the unbounded domain. If the domain is truncated, some suitable artificial boundary condition needs to be enforced. However, artificial boundary conditions that work well for the scattering problems might not work for scattering resonances.

\item[(3)] The scattering operator depends nonlinearly on the wave number. The convergence analysis of finite element methods for nonlinear problem is challenging. 

\item[(4)] Numerical methods for scattering resonances lead to nonlinear matrix eigenvalue problems. Effective nonlinear algebraic eigensolvers are required.
\end{itemize}

There exist some works on the computation of scattering resonances \cite{ JonathanMarletta, HohageNannen2009, KimPasciak2009MC, OlafUnger2017MMAS, NannenWess2018BIT, Halla2022SINUM, Araujo2021JCP, SunMa, XiLinSun}. In \cite{NannenWess2018BIT}, the unbounded domain is truncated using 
the perfectly matched layer (PML) and the dependence of spurious resonances on the discretization parameters and the complex scaling function was analyzed. In \cite{JonathanMarletta}, the scattering resonance problem was treated based on the internal and external DtN mappings. The nonlinear matrix-valued eigenvalue problem was solved by the gradient descent method and thus some resonances may be missed in the calculation process. In \cite{SunMa}, an accurate Nystr\"{o}m method was proposed to compute the scattering poles for sound soft obstacles. Recently, in \cite{XiLinSun}, a robust finite element DtN method was developed to solve a multiscale, penetrable, highly nonlinear electromagnetic scattering resonance problem. An interesting relevant work \cite{Grubisic2023JSC} considered the computation of the quasi-resonances and proposed a boundary element method to search for the minima of the smallest singular value of the matrix as a function of the wave number (see also the determination of resonant states in \cite{Lenoir1992}).

In contrast to the computational results for scattering resonances, the convergence analysis is rarely addressed. In this paper, we propose a finite element DtN method and prove an error estimate. Combining with the parallel spectral indicator method (SIM), we develop an effective numerical method for scattering poles of sound hard obstacles. The method can be extended to compute scattering resonances for other scattering problems.

We start with a reformulation of the scattering poles as the eigenvalues of a holomorphic Fredholm operator function. As for the scattering problem, we truncate the unbounded domain using an artificial boundary, on which we enforce the DtN mapping (see, for example, \cite{Hsiao2011}). The advantage of the DtN mapping is the equivalence of the truncated problem and the original problem. Numerical experiments indicate that the DtN mapping does not pollute the spectrum. For scattering problems, other techniques including the absorbing boundary conditions and PML have been used to truncate the unbounded domain approximating the Sommerfeld radiation condition. However, these boundary conditions might not work or produce spurious modes that significantly pollute the spectrum \cite{KimPasciak2009MC,NannenWess2018BIT, Araujo2021JCP}.

On the truncated domain, the linear Lagrange element is employed for discretization. We prove the convergence using the abstract spectral approximation theory of holomorphic Fredholm operator functions \cite{karma1996a, karma1996b} by taking into account both errors of the truncated DtN mapping and the finite element discretization. 

Finally, a recently developed parallel SIM is used to compute the eigenvalues in a given region on the lower half-plane \cite{XiSun2023}. SIM is based on contour integrals and was originally designed for non-Hermitian matrix eigenvalue problems \cite{HuangSun, HuangSunYang}. The parallel SIM is highly scalable and has been successfully used to compute several nonlinear eigenvalue problems \cite{Gong2022MC, SunMa, XiLinSun}.

The proposed method is flexible in the sense that non-constant coefficients or non-smooth boundaries can be treated easily. It  does not need initial guesses, which are required by Newton type methods or gradient descent methods. Moreover, the use of parallel SIM is efficient by computing multiple poles simultaneously. The combination of the FEM, DtN mapping and parallel SIM provides a practical tool to compute scattering resonances of various problems.

The rest of the paper is organized as follows. In Section~\ref{SR}, we introduce the scattering resonances for sound hard obstacles and the DtN mapping on the artificial boundary. The scattering resonances are the eigenvalues of a holomorphic Fredholm operator function. Section~\ref{FE} proposes a finite element method for discretization. We analyze the approximation properties of the related discrete operators. The convergence analysis is then carried out using the abstract approximation theory for holomorphic Fredholm operator functions. In Section~\ref{NE}, the parallel SIM is used to compute the resulting nonlinear algebraic eigenvalue problem. Numerical experiments are presented to demonstrate the effectiveness. We draw some conclusions and discuss future work in Section~\ref{CF}.

\section{Scattering Resonances of Sound Hard Obstacles}\label{SR}
Let $D \subseteq \mathbb R^2$ be a bounded simply connected Lipschitz domain and $D^c := \mathbb{R}^2 \setminus \overline{D}$, where $\overline{D}$ is the closure of $D$. Denote by $\Gamma:=\partial D$, the boundary of $D$. Let $\nu$ denote the outward unit normal on the boundary of a domain. The acoustic scattering problem for the sound hard $D$ is to  find $u$ satisfying

\begin{equation}\label{NeumannBCP}
\left\{
\begin{array}{rlll}
\Delta u + k^2 u &=& 0&{\rm in} \quad D^c,\\
\frac{\partial u}{\partial \nu}&=&g&{\rm on}\quad \Gamma,\\
\end{array}
\right.
\end{equation}
and the Sommerfeld radiation condition 
\begin{equation}\label{SRC}
\frac{\partial u}{\partial r}-iku=o(r^{-1/2}), \quad r=|x|\rightarrow \infty.
\end{equation}
If $\Im(k)\geq0$, then the scattering problem has a unique solution $u$ in $H_{\text{loc}}^1(D^c)$. The solution operator ${\mathcal S}(k) : g\mapsto u$ is holomorphic on the upper half-plane and can be meromorphically extended to $\mathbb{C}\backslash{\mathbb{R}^-}$ with $\mathbb{R}^-$ being the set of non-positive real numbers \cite{Lenoir1992}. The scattering resonances are defined as the poles of ${\mathcal S}$ in $\{k\in\mathbb{C}: {\rm Im}(k)<0\}$ \cite{SemyonMaciej}.

A solution $u(x)$ to the Helmholtz equation whose domain of definition contains the exterior of some disk is called radiating if it satisfies the Sommerfeld radiation condition \eqref{SRC}. It is well known that, for ${\rm Im}(k) \ge 0$, the radiating condition \eqref{SRC} is equivalent to the series expansion of $u$ of the form
\begin{equation}\label{outgoingconditon}
u(x)=\sum_{n=-\infty}^{\infty}a_nH_n^{(1)}(kr)e^{in\theta},\ \ \ |x|>r_0,
\end{equation}
where $\theta=\arg(x)$, $r_0>0$ is a constant large enough, and $H_n^{(1)}$ is the first kind Hankel function of order $n$  \cite{Hsiao2011}.

The scattering problem is defined on the unbounded domain. To use the finite element method, one usually truncates $D^c$ and introduces some artificial boundary condition. Let $\Gamma_R$ be a circle centered at the origin with radius $R$ and $D$ inside. We denote by $\Omega$ the annular region between $\Gamma$ and $\Gamma_R$. In this paper, we employ the DtN mapping on $\Gamma_R$. Let the set of zeros of Hankel function $H_n^{1}(kR)$ be $Z_n$ and $Z=\bigcup_{n=0}^{\infty}Z_n$. Define $\Lambda = \mathbb{C}\setminus (\mathbb{R}^-\cup Z)$. Following \cite{Hsiao2011}, for $k\in \Lambda$, we define the DtN operator $T(k):H^{1/2}(\Gamma_R)\rightarrow H^{-1/2}(\Gamma_R)$ as 
\begin{equation}\label{DtN-mapping}
    T(k)\varphi = \sum_{n=0}^{+\infty}{\!}^{{}^{\,\scriptstyle{\prime}}}\frac{k}{\pi}\frac{{H^{(1)}_n}^{\prime}(kR)}{H^{(1)}_n(kR)}\int_0^{2\pi}\varphi(\phi)\cos(n(\theta-\phi))\text{d}\phi,
\end{equation}
where the summation with a prime $'$ means that the zero-th term is factored
by $1/2$. 

The DtN operator $T(k)$ is bounded. Futhermore, for real number $s\geq1/2$,
$T(k):H^{s}(\Gamma_R)\rightarrow H^{s-1}(\Gamma_R), k\in \Lambda$, satisfies
\begin{equation}\label{Tk}
\|T(k)\varphi\|_{H^{s-1}(\Gamma_R)}\leq C\|\varphi\|_{H^{s}(\Gamma_R)},
\end{equation}
where $C>0$ is a constant depending on $kR$ but not $\varphi$.

Using $T(k)$, we obtain the following problem defined on the bounded domain $\Omega$, which is equivalent to the scattering problem \eqref{NeumannBCP} and \eqref{SRC} for ${\rm Im}(k)\ge 0$. Given $g\in H^{-1/2}(\Gamma)$, find $u\in H^1(\Omega)$ such that
\begin{equation}\label{DtN_Continuous_P}
\left\{
\begin{array}{rlll}
\Delta u + k^2 u &=& 0,&\rm{in}\ \Omega,\\
\frac{\partial u}{\partial \nu}&=&g,&\rm{on}\ \Gamma,\\
\frac{\partial u}{\partial \nu}&=&T(k)u,&\rm{on}\ \Gamma_R.
\end{array}
\right.
\end{equation}

The variational formulation for (\ref{DtN_Continuous_P}) is to find $u\in H^1(\Omega)$ such that
\begin{equation}\label{DtN-NEP}
(\nabla u, \nabla v) - k^2(u,v) - \langle T(k)u,v\rangle_{\Gamma_R} = \langle g,v\rangle,\quad \forall v\in H^1(\Omega),
\end{equation}
where $(\cdot,\cdot)$ denotes the $L^2(\Omega)$ inner product,
$\langle \cdot,\cdot\rangle$ the $H^{-1/2}(\Gamma)$-$H^{1/2}(\Gamma)$ duality,
and $\langle \cdot,\cdot\rangle_{\Gamma_R}$ the 
$H^{-1/2}(\Gamma_R)$-$H^{1/2}(\Gamma_R)$ duality.

Let $(\cdot,\cdot)_1$ be the $H^1(\Omega)$ inner product. Define the operators $B(k) : H^1(\Omega)\rightarrow H^1(\Omega)$, $k\in \Lambda$, and $M : H^{-1/2}(\Gamma)\rightarrow H^1(\Omega)$, respectively, by
\begin{equation}\label{B}
\left(B(k)u,v\right)_1 = (\nabla u, \nabla v) - k^2(u,v) - \langle T(k)u,v\rangle_{\Gamma_R},\quad \forall v\in H^1(\Omega)
\end{equation}
and
\begin{equation}\label{M}
\left(M g,v\right)_1 = \langle g,v\rangle, \quad \forall v\in H^1(\Omega).
\end{equation}
The operator form of \eqref{DtN-NEP} reads
\begin{equation}\nonumber
B(k)u = Mg,
\end{equation}
which defines the solution operator $B(k)^{-1}M$ that maps $g$ to $u$ for ${\rm Im}(k) \ge 0$. 

The scattering resonances are the poles of $B(\cdot)^{-1}$. There is a one-to-one match between the poles of $B(\cdot)^{-1}$ and the eigenvalues of $B(\cdot)$ (see \cite{Lenoir1992} or Theorem C.10 of \cite{SemyonMaciej}). The eigenvalue problem is to find $(\lambda,u)\in\Lambda\times H^1(\Omega)$ such that
\begin{equation}\label{DtN-operator-fun-EVP}
B(\lambda)u=0, \ \ \ {\rm in}\ H^1(\Omega).
\end{equation}

We shall show that $B(\cdot)$ is a holomorphic Fredholm operator function, for which some spectral properties are recalled as follows. Let $X, Y$ be complex Banach spaces and $\Lambda\subseteq \mathbb C$. Denote by $\mathcal L(X, Y )$ the space of bounded linear operators from $X$ to $Y$. An operator $F\in\mathcal L(X,Y)$ is said to be Fredholm if (1) the subspace $\mathcal R(F)$ (range of $F$) is closed in $Y$; and (2) the subspace $\mathcal N(F)$ (null space of $F$) and the quotient space $Y/\mathcal R(F)$ are finite-dimensional. The index of a Fredholm operator $F$ is defined as
\begin{equation}\nonumber
{\rm ind}(F)=\mbox{dim} (\mathcal N(F))-\mbox{dim} (Y/\mathcal R(F)).
\end{equation}
For an operator function $F(\cdot): \Lambda\rightarrow \mathcal{L}(X,Y)$, the resolvent set $\rho(F)$ 
and the spectrum $\sigma(F)$ are defined respectively as
\[
 \rho(F)=\{ k \in \Lambda :F(k)^{-1} \mbox{ exists and is bounded}\}
 \]
 and
 \[
 \sigma(F)=\Lambda\backslash \rho(F).
\]

Denote by $\Phi_0(\Lambda,\mathcal{L}(X,Y))$ the set of all holomorphic operator functions $F(\cdot)$'s such that $F(k) \in \mathcal{L}(X,Y)$ is Fredholm with index zero for each $k \in \Lambda$. For $F(\cdot)\in \Phi_0(\Lambda,\mathcal{L}(X,Y))$, the eigenvalue problem is to find $(\lambda,x)\in \Lambda\times X, \ x\neq 0$, such that
\begin{equation}
F(\lambda)x=0, \quad \text{in } Y.
\end{equation}

\begin{definition} An ordered sequence of elements $x_0, x_1, \dots, x_j$ in $X$ is called a Jordan chain of $F$ at an eigenvalue $\lambda$ if
\begin{eqnarray}
F(\lambda)x_j+\frac{1}{1!} F^{(1)}(\lambda)x_{j-1}+\cdots+\frac{1}{j!} F^{(j)}(\lambda)x_{0}=0,\quad j=0,1,\cdots,
\end{eqnarray}
where $F^{(j)}$ denotes the $j$-th derivative.
\end{definition}

The length of any Jordan chain of an eigenvalue is finite. 
Elements of any Jordan chain of an eigenvalue $\lambda$ are called generalized eigenelements of $\lambda$.

\begin{definition}
The closed linear hull of all generalized eigenelements of an eigenvalue $\lambda$, denoted by $G(\lambda)$, is called the generalized eigenspace of $\lambda$.
\end{definition}

If $F(\cdot)\in \Phi_0(\Lambda,\mathcal{L}(X,Y))$ and its resolvent set is not empty, then
the spectrum $\sigma(F)$ has no cluster points in $\Lambda$, and every $\lambda \in \sigma(F)$ is an eigenvalue with finite dimensional generalized eigenspace $G(\lambda)$. 

Now we show that $B(\cdot)$ is a holomorphic Fredholm operator function
with index 0. We first split the operator $B(k)$ defined in \eqref{B} (see Theorem 4.2 in \cite{Hsiao2011}). Using the recurrence relation
\begin{equation}\label{recur}
    \frac{{H^{(1)}_n}^{\prime}(kR)}{H^{(1)}_n(kR)} = 
    \frac{H^{(1)}_{n-1}(kR)}{H^{(1)}_n(kR)} - \frac{n}{kR},
\end{equation}
it holds that
\begin{align}\nonumber
    -\langle T(k)u,v\rangle_{\Gamma_R} &= -\sum_{n=0}^{+\infty}{\!}^{{}^{\,\scriptstyle{\prime}}}k\pi\frac{{H^{(1)}_n}^{\prime}(kR)}{H^{(1)}_n(kR)}
    (a^u_n\overline{\displaystyle a^v_n} 
    + b^u_n\overline{\displaystyle b^v_n})R
    \\
    \label{Tu,v}
    &=\pi\sum_{n=0}^{+\infty}{\!}^{{}^{\,\scriptstyle{\prime}}}
    \left(n - kR\frac{H^{(1)}_{n-1}(kR)}{H^{(1)}_n(kR)}\right)(a^u_n\overline{\displaystyle a^v_n} 
    + b^u_n\overline{\displaystyle b^v_n}),
\end{align}
where
\begin{equation}\nonumber
    a^u_n = \frac{1}{\pi}\int_0^{2\pi}u(R,\phi)\cos(n\phi)\text{d}\phi,
    \quad
    b^u_n = \frac{1}{\pi}\int_0^{2\pi}u(R,\phi)\sin(n\phi)\text{d}\phi.
\end{equation}
Let $\epsilon$ be a fixed number such that\! $0<\epsilon<1/2$.
Define two operators $A: H^1(\Omega)\rightarrow H^1(\Omega)$
and $K(k): H^1(\Omega)\rightarrow H^1(\Omega)$, $k\in \Lambda$, by
\begin{align}\nonumber
    (Au,v)_1 &= (\nabla u,\nabla v) + (u,v) + \pi\sum_{n=0}^{+\infty}{\!}^{{}^{\,\scriptstyle{\prime}}}
n
(a_n^u\overline{\displaystyle a_n^v} 
+ b_n^u\overline{\displaystyle b_n^v}),
\\
\nonumber
(K(k)u,v)_1 &= ((k^2+1)u,v) + \pi\sum_{n=0}^{+\infty}{\!}^{{}^{\,\scriptstyle{\prime}}}
kR\frac{H^{(1)}_{n-1}(kR)}{H^{(1)}_n(kR)}
(a_n^u\overline{\displaystyle a_n^v} 
+ b_n^u\overline{\displaystyle b_n^v}),
\end{align}
for all $v\in H^1(\Omega)$.
Make use of the norm on $H^s(\Gamma_R)$,
\begin{equation*}
\Vert u\Vert_{H^s(\Gamma_R)}^2 = \sum_{n=0}^{+\infty}{\!}^{{}^{\,\scriptstyle{\prime}}}(n^2+1)^s(|a_n^u|^2 + |b_n^u|^2),
\end{equation*}
and Lemma 3.1 of \cite{Hsiao2011}, which states that there exists
a constant $C$ dependent on $kR$ such that
\begin{equation*}
\left|\frac{H^{(1)}_{n-1}(kR)}{H^{(1)}_n(kR)}\right|\leqslant C,\quad \forall n\in\mathbb{N}.
\end{equation*}
One has that
\begin{align}\nonumber
\nonumber
\left|\pi\sum_{n=0}^{+\infty}{\!}^{{}^{\,\scriptstyle{\prime}}}
n
(a_n^u\overline{\displaystyle a_n^v} 
+ b_n^u\overline{\displaystyle b_n^v})\right|
&\leq C\Vert u\Vert_{H^{1/2}(\Gamma_R)}\Vert v\Vert_{H^{1/2}(\Gamma_R)}\leq C\Vert u\Vert_1\Vert v\Vert_1,
\\
\nonumber
\left|\pi\sum_{n=0}^{+\infty}{\!}^{{}^{\,\scriptstyle{\prime}}}
kR\frac{H^{(1)}_{n-1}(kR)}{H^{(1)}_n(kR)}
(a_n^u\overline{\displaystyle a_n^v} 
+ b_n^u\overline{\displaystyle b_n^v})\right|
&\leq C\Vert u\Vert_{L^2(\Gamma_R)}\Vert v\Vert_{L^2(\Gamma_R)}
\leq C\Vert u\Vert_{1/2+\epsilon}\Vert v\Vert_1,
\end{align}
where $C$ is a generic constant,
and $\Vert\cdot\Vert_s$
denotes the $H^s(\Omega)$-norm.
Therefore, the operators $A$ and $K(k)$ are well-defined,
and the extension of $K(k)$ on $H^{1/2+\epsilon}(\Omega)$
is bounded. We have that
\begin{equation}\label{B split}
B(k) = A - K(k).
\end{equation}
\begin{lemma}\label{holomorphicB}
    The operator function $B(\cdot)$ is holomorphic Fredholm with index $0$ on $\Lambda$. Furthermore, the resolvent set $\rho(B)$ is non-empty.
\end{lemma}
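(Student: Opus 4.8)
The plan is to read off all three conclusions from the additive splitting $B(k)=A-K(k)$ in \eqref{B split}, treating the constant part $A$ and the compact, $k$-dependent part $K(k)$ separately. First I would show that $A$ is an isomorphism of $H^1(\Omega)$. Testing its defining form against $v=u$ gives
\begin{equation}\nonumber
(Au,u)_1 = \|\nabla u\|_{L^2(\Omega)}^2 + \|u\|_{L^2(\Omega)}^2 + \pi\sum_{n=0}^{+\infty}{\!}^{{}^{\,\scriptstyle{\prime}}} n\left(|a_n^u|^2 + |b_n^u|^2\right)\ \ge\ \|u\|_1^2,
\end{equation}
because every term of the series is nonnegative. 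Together with the boundedness of the form already recorded before \eqref{B split}, the Lax--Milgram theorem shows that $A$ is boundedly invertible, i.e. a linear homeomorphism of $H^1(\Omega)$; in particular $A$ is Fredholm with ${\rm ind}(A)=0$.

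Next I would prove that $K(k)$ is compact for each fixed $k\in\Lambda$. The estimate $|(K(k)u,v)_1|\le C\|u\|_{1/2+\epsilon}\|v\|_1$ obtained above says exactly that $\|K(k)u\|_1\le C\|u\|_{1/2+\epsilon}$, i.e. $K(k)$ extends to a bounded operator from $H^{1/2+\epsilon}(\Omega)$ to $H^1(\Omega)$. Writing the operator on $H^1(\Omega)$ as the composition of the inclusion $\iota:H^1(\Omega)\hookrightarrow H^{1/2+\epsilon}(\Omega)$ with this bounded extension, and noting that $\iota$ is compact by the Rellich--Kondrachov theorem on the bounded Lipschitz domain $\Omega$ (since $\epsilon<1/2$ gives $1>1/2+\epsilon$), I conclude that $K(k)$ is compact. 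Consequently $B(k)=A-K(k)$ is a compact perturbation of an isomorphism, so it is Fredholm with ${\rm ind}(B(k))={\rm ind}(A)=0$ for every $k\in\Lambda$.

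For holomorphy, since $A$ is independent of $k$ it suffices to treat $K(\cdot)$. Each coefficient $k^2+1$ and $kR\,H^{(1)}_{n-1}(kR)/H^{(1)}_n(kR)$ is holomorphic on $\Lambda$: the Hankel functions are holomorphic on the cut plane $\mathbb{C}\setminus\mathbb{R}^-$, and the denominators do not vanish there precisely because $\Lambda=\mathbb{C}\setminus(\mathbb{R}^-\cup Z)$. Using the uniform bound on the ratio (Lemma 3.1 of \cite{Hsiao2011}) one checks that, for fixed $u,v$, the scalar series defining $(K(k)u,v)_1$ converges locally uniformly in $k$, so each such matrix element is holomorphic on $\Lambda$. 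Since $\|K(k)\|$ is also locally bounded on $\Lambda$, this weak holomorphy upgrades to norm holomorphy by the standard Banach-space argument, and therefore $B(\cdot)\in\Phi_0(\Lambda,\mathcal L(H^1(\Omega),H^1(\Omega)))$.

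It remains to show $\rho(B)\neq\emptyset$. The zeros of $H^{(1)}_n$ all lie in the open lower half-plane, so the open upper half-plane is contained in $\Lambda$. For any $k$ with $\Im(k)>0$, the truncated problem \eqref{DtN_Continuous_P} is equivalent to the exterior scattering problem \eqref{NeumannBCP}--\eqref{SRC}, whose homogeneous version ($g=0$) admits only the trivial solution; hence $B(k)$ is injective. An injective Fredholm operator of index $0$ is bijective with bounded inverse, whence $k\in\rho(B)$ and $\rho(B)\neq\emptyset$. The main obstacle I anticipate is the holomorphy step --- specifically, justifying that the term-by-term holomorphic infinite series assembles into a genuinely norm-holomorphic operator-valued function; the Fredholm bookkeeping and the resolvent nonemptiness are then essentially immediate from the splitting, the $H^{1/2+\epsilon}$ bound, and the quoted well-posedness together with the location of the Hankel zeros.
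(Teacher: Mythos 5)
Your proposal is correct and follows essentially the same route as the paper: the splitting $B(k)=A-K(k)$, coercivity of $A$ giving invertibility, compactness of $K(k)$ via the bounded extension to $H^{1/2+\epsilon}(\Omega)$ composed with the compact embedding $H^1(\Omega)\hookrightarrow H^{1/2+\epsilon}(\Omega)$, and nonemptiness of $\rho(B)$ from invertibility in the upper half-plane. The only place you diverge is the holomorphy step: the paper differentiates the Hankel ratio explicitly using the recurrence \eqref{recur} and shows that $T'(k)$ (hence $B'(k)$) is a bounded operator on $H^1(\Omega)$, whereas you invoke weak holomorphy of the matrix elements $(K(k)u,v)_1$ plus local boundedness and the standard weak-to-norm upgrade for operator-valued functions. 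Both are valid; the paper's computation is more explicit (and reuses the derivative bound later), while yours avoids the recurrence manipulation at the cost of citing the abstract holomorphy criterion.
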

\begin{proof}
    For $k\in \Lambda$, since $T(k)$ is bounded,
    $B(k)$ is bounded.
    By the recurrence formula \eqref{recur}, it holds that
    \begin{align}\nonumber
        \left(k\frac{H^{(1)}_{n-1}(kR)}{H^{(1)}_n(kR)}\right)^{\prime} 
        &= \frac{H^{(1)}_{n-1}(kR)}{H^{(1)}_n(kR)}
        +kR\frac{{H^{(1)}_{n-1}}^{\prime}(kR)}{H^{(1)}_n(kR)}
        -kR\frac{H^{(1)}_{n-1}(kR)}{H^{(1)}_n(kR)}\frac{{H^{(1)}_n}^{\prime}(kR)}{H^{(1)}_n(kR)}
        \\
        \nonumber
        &= \frac{H^{(1)}_{n-1}(kR)}{H^{(1)}_n(kR)}\left(
        1+kR\frac{{H^{(1)}_{n-1}}^{\prime}(kR)}{H^{(1)}_{n-1}(kR)}
        -kR\frac{{H^{(1)}_n}^{\prime}(kR)}{H^{(1)}_n(kR)}
        \right)
        \\
        \nonumber
        &= \frac{H^{(1)}_{n-1}(kR)}{H^{(1)}_n(kR)}\left(
        2+kR\frac{H^{(1)}_{n-2}(kR)}{H^{(1)}_{n-1}(kR)}
        -kR\frac{H^{(1)}_{n-1}(kR)}{H^{(1)}_n(kR)}
        \right).
    \end{align}
    For $u, v\in H^1(\Omega)$, we take the derivative of \eqref{Tu,v} to obtain
    \begin{equation}\nonumber
        |\langle T^{\prime}(k)u,v\rangle_{\Gamma_R}|\leq
        C\sum_{n=0}^{+\infty}{\!}^{{}^{\,\scriptstyle{\prime}}}
        (n^2+1)^{1/2}(a^u_n\overline{\displaystyle a^v_n} 
        + b^u_n\overline{\displaystyle b^v_n})
        \leq C\Vert u\Vert_1\Vert v\Vert_1,
    \end{equation}
    which implies that $T^{\prime}(k):H^1(\Omega)\rightarrow H^1(\Omega)$
    is well-defined and bounded.
    Taking the derivative of \eqref{B}, we see that $B(\cdot)$ is holomorphic
    with $B^{\prime}(k):H^1(\Omega)\rightarrow H^1(\Omega)$ satisfying
    \begin{equation}\nonumber
        (B^{\prime}(k)u,v)_1 = -2k(u,v) 
        - \langle T^{\prime}(k)u,v\rangle_{\Gamma_R},\quad 
        \forall v\in H^1(\Omega).
    \end{equation}
    Since 
    \[
        (Au,u)_1 = (\nabla u,\nabla u) + (u,u) + \pi\sum_{n=0}^{+\infty}{\!}^{{}^{\,\scriptstyle{\prime}}}
        n(|a_n^u|^2 + |b_n^u|^2) \geq \Vert u\Vert_1^2, \quad \forall u\in H^1(\Omega),
    \]
    $A$ is invertible.  For $k\in \Lambda$,
    since the extension of $K(k)$ on $H^{1/2+\epsilon}(\Omega)$ is bounded and $H^1(\Omega)$ is compactly imbedded in $H^{1/2+\epsilon}(\Omega)$, $K(k)$ is compact. Consequently, $B(k)$ is a Fredholm operator of index $0$ for $k \in \Lambda$. 
    
    Since $B(k)$ is invertible for $k$ in the upper half-plane, its resolvent set is non-empty.
\end{proof}

For practical computation, one usually truncates the DtN mapping \eqref{DtN-mapping} to obtain $T^N(k): H^{1/2}(\Gamma_R)\rightarrow H^{-1/2}(\Gamma_R)$ such that
\begin{equation}\label{DtN-discrete-mapping}
T^N(k)\varphi = \sum_{n=0}^{N}{\!}^{{}^{\,\scriptstyle{\prime}}}\frac{k}{\pi}\frac{{H^{(1)}_n}^{\prime}(kR)}{H^{(1)}_n(kR)}\int_0^{2\pi}\varphi(\phi)\cos(n(\theta-\phi))\text{d}\phi.
\end{equation}
The integer $N$ is called the truncation order. Similar to \eqref{Tk}, $T^N(k)$ is bounded, i.e.,
\begin{equation}\label{TNk}
    \Vert T^N(k)\varphi\Vert_{H^{-1/2}(\Gamma_R)}\leq C\Vert \varphi\Vert_{H^{1/2}(\Gamma_R)}.
\end{equation}

The following lemma provides the error estimate for $T^N(k)$.
\begin{lemma}\label{lemma24} (Theorem 3.3 of \cite{Hsiao2011})
For the DtN mapping $T(k)$ and the truncated DtN mapping $T^N(k)$ defined, respectively, by (\ref{DtN-mapping}) and (\ref{DtN-discrete-mapping}), it holds that
\begin{equation}\nonumber
\|(T(k)-T^N(k))u\|_{H^{s-1}(\Gamma_R)}\leq C \frac{\gamma_{s+t}(N,u)}{N^t}\|u\|_{H^{s+t}(\Gamma_R)},\ \forall u\in H^{s+t}(\Gamma_R),\ s\geq\frac{1}{2},\ t\geq 0,
\end{equation}
where
\begin{equation}\label{55555}
\gamma_{s+t}(N,u)=
\frac{   \Big\{  \sum_{n=N+1}^{\infty}(1+n^2)^{s+t}(|a_n^u|^2+|b_n^u|^2)  \Big\}^{\frac{1}{2}}   }
     {   \left\{ \frac{|a_0^u|^2}{2}+\sum_{n=1}^{\infty}(1+n^2)^{s+t}(|a_n^u|^2+|b_n^u|^2)\right\}^{\frac{1}{2}} }\in [0,1]
\end{equation}
and $C>0$ is a constant depending on $kR$ but not on $u$ and $N$ .
\end{lemma}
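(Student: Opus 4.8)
The plan is to diagonalize both operators in the Fourier basis on $\Gamma_R$, which reduces the estimate to a scalar computation mode by mode, and then to control the growth of the Hankel symbol. First I would expand $u(R,\theta)=\sum_{n=0}^{\infty}{}'(a_n^u\cos n\theta+b_n^u\sin n\theta)$ and carry out the angular integral in \eqref{DtN-mapping} using $\cos(n(\theta-\phi))=\cos n\theta\cos n\phi+\sin n\theta\sin n\phi$. Since the $n$-th mode reproduces itself, this gives
\[
T(k)u=\sum_{n=0}^{\infty}{}'\,k\frac{{H^{(1)}_n}'(kR)}{H^{(1)}_n(kR)}\bigl(a_n^u\cos n\theta+b_n^u\sin n\theta\bigr),
\]
and the same formula for $T^N(k)$ with the sum truncated at $N$. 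Subtracting, the difference is supported on modes $n\geq N+1$, so the prime on the $n=0$ term is irrelevant and
\[
(T(k)-T^N(k))u=\sum_{n=N+1}^{\infty}k\frac{{H^{(1)}_n}'(kR)}{H^{(1)}_n(kR)}\bigl(a_n^u\cos n\theta+b_n^u\sin n\theta\bigr).
\]
By the stated $H^{s-1}(\Gamma_R)$ norm, the left side squared equals $\sum_{n=N+1}^{\infty}(1+n^2)^{s-1}|\sigma_n(k)|^2(|a_n^u|^2+|b_n^u|^2)$, where $\sigma_n(k):=k{H^{(1)}_n}'(kR)/H^{(1)}_n(kR)$ is the symbol.

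The crucial step is the bound on $\sigma_n(k)$. Using the recurrence \eqref{recur} I would write $\sigma_n(k)=k\,H^{(1)}_{n-1}(kR)/H^{(1)}_n(kR)-n/R$, then invoke Lemma 3.1 of \cite{Hsiao2011} to bound the Hankel quotient by a constant depending only on $kR$. This yields $|\sigma_n(k)|\leq C|k|+n/R\leq C(1+n)\leq C(1+n^2)^{1/2}$, hence $|\sigma_n(k)|^2\leq C(1+n^2)$. This one extra power of $(1+n^2)$ is exactly what upgrades the weight from $s-1$ to $s$, so the tail becomes $\leq C\sum_{n=N+1}^{\infty}(1+n^2)^{s}(|a_n^u|^2+|b_n^u|^2)$.

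Finally I would extract the negative power of $N$. Writing $(1+n^2)^{s}=(1+n^2)^{s+t}(1+n^2)^{-t}$ and using $(1+n^2)^{-t}\leq N^{-2t}$ for every $n\geq N+1$ (valid since $t\geq 0$), the tail is bounded by $N^{-2t}\sum_{n=N+1}^{\infty}(1+n^2)^{s+t}(|a_n^u|^2+|b_n^u|^2)$. Recognizing this remaining tail as precisely $[\gamma_{s+t}(N,u)]^2$ times the squared denominator in \eqref{55555}, which in turn equals $\|u\|_{H^{s+t}(\Gamma_R)}^2$ by the norm formula (the $n=0$ contribution being $|a_0^u|^2/2$ since $b_0^u=0$), I obtain
\[
\|(T(k)-T^N(k))u\|_{H^{s-1}(\Gamma_R)}^2\leq C\,N^{-2t}[\gamma_{s+t}(N,u)]^2\|u\|_{H^{s+t}(\Gamma_R)}^2,
\]
and taking square roots gives the claim. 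I expect the only genuine obstacle to be the symbol bound through Lemma 3.1 of \cite{Hsiao2011}; once the linear-in-$n$ growth of $\sigma_n$ is in hand, everything else is the standard diagonal tail estimate and a matter of matching the definition of $\gamma_{s+t}(N,u)$.
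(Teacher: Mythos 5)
Your argument is correct: the paper itself does not prove this lemma but simply quotes it as Theorem 3.3 of the cited DtN--FEM reference, and your reconstruction (Fourier diagonalization of $T(k)-T^N(k)$, the symbol bound $|\sigma_n(k)|\leq C(1+n^2)^{1/2}$ via the recurrence \eqref{recur} and the uniform bound on $H^{(1)}_{n-1}/H^{(1)}_n$, then the tail estimate $(1+n^2)^{-t}\leq N^{-2t}$ and the identification of the remaining tail with $\gamma_{s+t}(N,u)^2\|u\|_{H^{s+t}(\Gamma_R)}^2$) is precisely the standard proof given in that reference. No gaps.
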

Using $T^N(k)$, the approximation $B^N(k)$ of $B(k)$ is defined by
\begin{equation}\label{BN}
    (B^N(k)u,v)_1 = (\nabla u, \nabla v) - k^2(u,v) - \langle T^N(k)u,v\rangle_{\Gamma_R},\quad \forall v\in H^1(\Omega).
\end{equation}
The associated operator form is
\begin{equation}\label{Truncated-EVP}
    B^N(k)u = Mg.
\end{equation}
Similar to \eqref{B split}, we write
\begin{equation}\label{BN split}
B^N(k) = A^N - K^N(k),
\end{equation}
where $A^N : H^1(\Omega)\rightarrow H^1(\Omega)$ 
and $K^N(k) : H^1(\Omega)\rightarrow H^1(\Omega)$ are defined, respectively, by 
\[
(A^Nu,v)_1 = (\nabla u,\nabla v) + (u,v) + \pi\sum_{n=0}^{N}{\!}^{{}^{\,\scriptstyle{\prime}}}
n
(a_n^u\overline{\displaystyle a_n^v} 
+ b_n^u\overline{\displaystyle b_n^v}),
\]
and
\[
(K^N(k)u,v)_1 = ((k^2+1)u,v) + \pi\sum_{n=0}^{N}{\!}^{{}^{\,\scriptstyle{\prime}}}
kR\frac{H^{(1)}_{n-1}(kR)}{H^{(1)}_n(kR)}
(a_n^u\overline{\displaystyle a_n^v} 
+ b_n^u\overline{\displaystyle b_n^v}).
\]
Note that the extension of $K^N(k)$ on $H^{1/2+\epsilon}(\Omega)$ is also bounded.

The eigenvalue problem using $T^N(k)$ is to find $(\lambda,u)\in\Lambda\times H^1(\Omega)$ such that
\begin{equation}\label{DtN-operator-truncated-fun-EVP}
B^N(\lambda)u=0, \ \ \ {\rm in}\ H^1(\Omega).
\end{equation}

\section{Finite Element Approximation}\label{FE}
In this section, we employ the linear Lagrange element method to discretize \eqref{DtN-operator-truncated-fun-EVP} and prove the convergence of eigenvalues. In particular, we use the abstract approximation theory for holomorphic Fredholm operator functions, which relies on the notion of regular convergence. We first introduce the definitions of operator convergence and present the approximation theory (see \cite{karma1996a,karma1996b,Vainikko1976} for more details). We prove two sufficient conditions for regular convergence. Then we show that the finite element approximation is regular and thus the convergence of eigenvalues holds.

Let $X, Y$ be Banach spaces.
Let $X_n, Y_n, n\in \mathbb{N},$ be
the respective sequences of approximation Banach spaces for $X$ and $Y$. Denote by $\mathcal{P}$ and $\mathcal{Q}$ two sequences of operators $\mathcal{P}=\{p_n\}_{n\in \mathbb{N}}$, $\mathcal{Q}=\{q_n\}_{n\in \mathbb{N}}$ with $p_n\in \mathcal{L}(X,X_n)$ and $q_n\in \mathcal{L}(Y,Y_n)$ such that
\begin{equation}
\lim_{n\to \infty} \| p_n x \|_{X_n}=\| x\|_X,\ \forall x\in X, \quad \lim_{n\to \infty} \| q_n y \|_{Y_n}=\| y\|_Y,\ \forall y\in Y.
\end{equation}

\begin{definition}
    A sequence $\{y_n\}_{n\in \mathbb{N}}$ with $y_n\in Y_n$
is $\mathcal{Q}$-compact if,
for each subsequence $\{y_n\}_{n\in \mathbb{N}^{\prime}}$, 
$\mathbb{N}^{\prime}\subseteq \mathbb{N}$,
there exists $\mathbb{N}^{\prime\prime}\subseteq \mathbb{N}^{\prime}$
and $y\in Y$ such that \! $\Vert y_n - q_n y\Vert_{Y_n}\rightarrow 0$,
$n\rightarrow \infty$, $n\in \mathbb{N}^{\prime\prime}$.
A sequence $\{x_n\}_{n\in \mathbb{N}}$ with $x_n\in X_n$
being $\mathcal{P}$-compact is defined similarly.
\end{definition}

For an operator $F\in \mathcal{L}(X,Y)$ and a sequence of operators $F_n\in \mathcal{L}(X_n,Y_n)$, we define the convergence, uniform convergence, compact convergence, stable convergence, and regular convergence as follows.
\begin{itemize}
\item    $F_n$ converges to $F$
    if 
    \begin{equation}\nonumber
    \Vert F_n p_n x - q_n F x\Vert_{Y_n} \rightarrow 0, 
    \quad n\rightarrow \infty, \quad\forall x\in X.
    \end{equation}

\item    $F_n$ converges uniformly to
    $F$ if 
    \begin{equation}\nonumber
        \Vert F_n p_n - q_n F\Vert \rightarrow 0,\quad n\rightarrow \infty.
    \end{equation}

\item    $F_n$ converges compactly to 
    $F$ if $F_n$ converges to $F$, and $\{F_n x_n\}$ is $\mathcal{Q}$-compact for any $\{x_n\}$ with $\Vert x_n\Vert_{X_n}\leq 1, \forall n\in \mathbb{N}$.    

\item    $F_n$ converges stably to
    $F$ if $F_n$ converges to $F$,
    and there exists $n_0$ such that $F_n$ is invertible
    with $\Vert F_n^{-1}\Vert\leq C$, $\forall n\geq n_0$.

\item    $F_n$ converges regularly to
    $F$ if $F_n$ converges to $F$,
    and $\{x_n\}$ is $\mathcal{P}$-compact
    for any $\{x_n\}$ such that $\{F_n x_n\}$ is $\mathcal{Q}$-compact.
\end{itemize}

Let $F(\cdot)\in \Phi_0(\Lambda,\mathcal L(X, Y))$ and $\rho(F)\neq \emptyset$.
To approximate the eigenvalues of $F(\cdot)$, we consider a sequence of operator functions $F_n(\cdot) \in \Phi_0(\Lambda,\mathcal L(X_n, Y_n )), \ n \in \mathbb N$. 
If $F_n(\cdot)$ and $F(\cdot)$ satisfy the following properties:
\begin{itemize}

\item[1.] $\{F_n(\cdot)\}_ {n\in \mathbb N}$ is equibounded on any 
compact $\Lambda_0\subseteq \Lambda$, i.e., 
\begin{equation}
\| F_n(k) \|\leq C, \quad \forall k\in \Lambda_0,\ n\in \mathbb N,
\end{equation}

\item[2.]  $F_n(k)$ converges regularly to
$F(k)$ for every $k\in \Lambda$,
\end{itemize}
then the following theorem on the convergence of eigenvalues of $F_n(\cdot)$ holds \cite{karma1996b}.

\begin{theorem}\label{KarmaThm} 
    Let $\Lambda_0$ be a compact subset of $\Lambda$ such that 
    $\partial \Lambda_0\subseteq \rho(F)$ and $\sigma(F)\cap \Lambda_0 = \{\lambda\}$.
    Then for sufficiently large $n$,
    there exist $\lambda_n\in \sigma(F_n)\cap \Lambda_0$  such that
    \begin{equation*}
        |\lambda_n - \lambda| \leq C \epsilon_n^{1/r},
    \end{equation*}
    where
    \begin{equation*}
        \epsilon_n = \sup_{\substack{k\in \partial \Lambda_0\\ u\in G(\lambda), \Vert u\Vert_1 = 1}}\Vert F_n(k)p_n u - q_n F(k)u\Vert_{Y_n},
    \end{equation*}
    and $r$ is the maximum length of Jordan chain of $\lambda$,
    and $G(\lambda)$ is the generalized eigenspace associated to $\lambda$. Moreover,
    the multiplicity of $\lambda$ and $\lambda_n$ coincides.
\end{theorem}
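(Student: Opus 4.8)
The plan is to prove the result by combining the equivalence between regular and stable convergence for index-zero Fredholm operators with a contour-integration (spectral projection) argument, and then to extract the quantitative rate from the local Jordan structure at $\lambda$. First I would establish that the approximation is stable on the contour, then match the algebraic multiplicities enclosed by $\partial\Lambda_0$, and finally quantify the displacement of the eigenvalue.

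\emph{Step 1 (uniform stable invertibility on the contour).} Since $\partial\Lambda_0\subseteq\rho(F)$ and $\partial\Lambda_0$ is compact, $F(k)^{-1}$ exists and is bounded uniformly for $k\in\partial\Lambda_0$. I would then show that, for $n$ large, $F_n(k)$ is invertible with $\|F_n(k)^{-1}\|\le C$ uniformly on $\partial\Lambda_0$. The argument is by contradiction: if this failed there would be $k_n\in\partial\Lambda_0$ and $x_n\in X_n$ with $\|x_n\|_{X_n}=1$ and $\|F_n(k_n)x_n\|_{Y_n}\to 0$; passing to a subsequence with $k_n\to k_*\in\partial\Lambda_0$, the sequence $\{F_n(k_n)x_n\}$ is $\mathcal Q$-compact (it tends to $0$), so regular convergence yields a $\mathcal P$-convergent subsequence with limit $x$ satisfying $\|x\|_X=1$, and equiboundedness together with $F_n(k_n)\to F(k_*)$ gives $F(k_*)x=0$, contradicting $k_*\in\rho(F)$. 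This is precisely the statement that regular convergence together with pointwise invertibility of the limit upgrades to stable convergence.

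\emph{Step 2 (spectral projections and multiplicity).} Having uniform invertibility on $\partial\Lambda_0$, I would introduce the Riesz-type spectral projections associated with the operator functions. The cleanest route is to pass to a holomorphic linearization of $F(\cdot)$, and correspondingly of $F_n(\cdot)$, so that the eigenvalue problem becomes the spectral problem for a bounded operator $\mathcal F$ with $\lambda$ an isolated point of its spectrum; the projection $E=\frac{1}{2\pi i}\oint_{\partial\Lambda_0}(zI-\mathcal F)^{-1}\,dz$ then has rank equal to the algebraic multiplicity $\dim G(\lambda)$. Using Step 1, the discrete projections $E_n$ are well defined, and the stable convergence of the resolvents on $\partial\Lambda_0$ forces $E_n\to E$ in the discrete sense, whence $\operatorname{rank}E_n=\operatorname{rank}E$ for large $n$. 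This yields both the existence of $\lambda_n\in\sigma(F_n)\cap\Lambda_0$ and the coincidence of multiplicities, while Step 1 guarantees that no eigenvalues accumulate on $\partial\Lambda_0$.

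\emph{Step 3 (the rate, the main obstacle).} The quantitative bound $|\lambda_n-\lambda|\le C\epsilon_n^{1/r}$ is the crux. I would localize near $\lambda$ and reduce, by a Schur-complement (Lyapunov--Schmidt) argument on the finite-dimensional range of $E$, the problem to a finite matrix function $\widetilde F(k)$ whose eigenvalue at $\lambda$ inherits the canonical Jordan structure; by the definition of $r$ as the maximal chain length, the associated scalar determinant-type quantity vanishes to order exactly $r$ at $\lambda$, i.e. behaves like $(k-\lambda)^r$ to leading order. The passage from $F$ to $F_n$ perturbs this reduced quantity on $\partial\Lambda_0$ by an amount controlled by $\epsilon_n=\sup_{k\in\partial\Lambda_0,\,u\in G(\lambda),\,\|u\|_1=1}\|F_n(k)p_nu-q_nF(k)u\|_{Y_n}$, precisely because $\epsilon_n$ measures the defect on the generalized eigenspace. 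A Rouch\'e/argument-principle estimate comparing the number of zeros inside small disks then shows that a root at $\lambda$ for $F$ can move by at most $O(\epsilon_n^{1/r})$ for $F_n$, the $r$-th root being forced by the order-$r$ vanishing. The delicate points are arranging the reduction so that $\epsilon_n$, restricted to $G(\lambda)$ and to $\partial\Lambda_0$, genuinely controls the perturbation of the reduced determinant, and handling several Jordan chains of differing lengths, where each partial multiplicity must be tracked before taking the worst case $r$.
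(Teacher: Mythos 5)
This theorem is not proved in the paper at all: it is quoted verbatim from Karma's convergence theory for holomorphic Fredholm operator functions (reference \cite{karma1996b}), and the authors use it as a black box, so there is no in-paper proof to compare against. Judged against the actual proof in that reference (and the underlying Vainikko framework), your outline follows the right general architecture: Step 1 is exactly the standard lemma that regular convergence plus invertibility of the limit upgrades to stable convergence, uniformly on the compact contour (with the minor repair that, since $k_n$ varies, you must split $F_n(k_n)x_n=F_n(k_*)x_n+(F_n(k_n)-F_n(k_*))x_n$ and use equiboundedness/Cauchy estimates to kill the second term before invoking regularity at the fixed point $k_*$); and Step 2's conclusion (multiplicity preservation) is correct, although Karma does not linearize to a single bounded operator but works directly with the operator function via a local Smith-type factorization and the argument principle applied to $\operatorname{tr}\oint F_n(k)^{-1}F_n'(k)\,dk$ --- your global linearization would need extra justification that the discrete linearizations inherit stable resolvent convergence.

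The genuine gap is in Step 3, and it is quantitative. A determinant-plus-Rouch\'e argument on the reduced finite matrix function controls the zero displacement by $\epsilon_n^{1/m}$, where $m=\sum_i m_i$ is the \emph{algebraic} multiplicity (the order of vanishing of the reduced determinant), not by $\epsilon_n^{1/r}$ with $r=\max_i m_i$ the maximal chain length; since $m\geq r$, this is a strictly weaker bound whenever there is more than one Jordan chain. The sharper exponent $1/r$ comes from a different mechanism: using the Smith form one shows $\Vert F(k)^{-1}\Vert\leq C|k-\lambda|^{-r}$ near $\lambda$, so that the smallest ``singular value'' of $F(k)$ restricted to $G(\lambda)$ behaves like $|k-\lambda|^{r}$; a Neumann-series perturbation argument then shows $F_n(k)$ remains invertible whenever $|k-\lambda|^{r}\geq C\epsilon_n$, which confines all discrete eigenvalues in $\Lambda_0$ to the disk of radius $C\epsilon_n^{1/r}$. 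You flag the need to ``track each partial multiplicity,'' but as written your plan does not contain the resolvent-blowup estimate that actually delivers the stated exponent, so this step would need to be reworked rather than merely refined.
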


The regular convergence plays a key role for the convergence of eigenvalues. Let $E, H\in \mathcal{L}(X,Y)$ and $E_n, H_n\in \mathcal{L}(X_n,Y_n)$. A sufficient condition for the regular convergence of $E_n+H_n$ to $E+H$ is stated in the following lemma (see Theorem 2.55 of \cite{Vainikko1976}).

\begin{lemma}\label{regular1}
    Assume that $E$ is invertible, $H$ is compact, $E_n\in \mathcal{L}(X_n,Y_n)$ converges stably to
    $E\in \mathcal{L}(X,Y)$, and
    $H_n\in \mathcal{L}(X_n,Y_n)$ converges compactly to
    $H\in \mathcal{L}(X,Y)$.
    Then $E_n+H_n$ converges regularly to $E+H$.
\end{lemma}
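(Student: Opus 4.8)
The plan is to prove Lemma~\ref{regular1} by verifying the two defining properties of regular convergence directly: first that $E_n+H_n$ converges to $E+H$, and second that every sequence $\{x_n\}$ whose image $\{(E_n+H_n)x_n\}$ is $\mathcal{Q}$-compact is itself $\mathcal{P}$-compact. The first property is immediate, since both $E_n \to E$ (implied by stable convergence) and $H_n \to H$ (implied by compact convergence) are ordinary convergences, and convergence is additive: $(E_n+H_n)p_n x - q_n(E+H)x = (E_n p_n x - q_n E x) + (H_n p_n x - q_n H x) \to 0$ for every $x\in X$.

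The substance lies in the second property. Let $\{x_n\}$, $x_n\in X_n$, be such that $\{(E_n+H_n)x_n\}$ is $\mathcal{Q}$-compact; I must show $\{x_n\}$ is $\mathcal{P}$-compact. I first establish that $\{x_n\}$ is bounded. Stable convergence of $E_n$ gives a uniform bound $\|E_n^{-1}\|\le C$ for $n\ge n_0$, so $\|x_n\|_{X_n} = \|E_n^{-1}E_n x_n\|_{X_n} \le C\|E_n x_n\|_{X_n}$, and since $E_n x_n = (E_n+H_n)x_n - H_n x_n$, I can bound $\|x_n\|$ once I control $\|H_n x_n\|$ and $\|(E_n+H_n)x_n\|$; the latter is bounded because a $\mathcal{Q}$-compact sequence is bounded, and the former I handle via the boundedness of $\{x_n\}$ itself together with compact convergence of $H_n$. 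To make this non-circular, I would argue by extracting subsequences: suppose for contradiction $\|x_n\|\to\infty$ along a subsequence, normalize to $\tilde x_n = x_n/\|x_n\|$, use compact convergence of $H_n$ to extract a further subsequence with $H_n\tilde x_n \to q_n H x$ for some $x$, and derive from the $\mathcal{Q}$-compactness of $(E_n+H_n)x_n$ (divided by the blowing-up norm) that $E_n\tilde x_n \to q_n(-Hx)$, whence stability forces $\tilde x_n$ to converge to a nonzero limit, contradicting the blow-up. Thus $\{x_n\}$ is bounded.

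With $\{x_n\}$ bounded, compact convergence of $H_n$ makes $\{H_n x_n\}$ $\mathcal{Q}$-compact. Then $\{E_n x_n\} = \{(E_n+H_n)x_n\} - \{H_n x_n\}$ is the difference of two $\mathcal{Q}$-compact sequences and hence $\mathcal{Q}$-compact. Passing to a subsequence, $E_n x_n - q_n w \to 0$ for some $w\in Y$; applying the stably convergent inverse and the relation $x_n = E_n^{-1}E_n x_n$, one shows $x_n - p_n(E^{-1}w) \to 0$ along that subsequence, which exhibits the $\mathcal{P}$-compactness of $\{x_n\}$.

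\textbf{The main obstacle} I expect is the boundedness argument for $\{x_n\}$: compact convergence only gives information about $H_n$ applied to bounded sequences, so one cannot directly assert $\{H_n x_n\}$ is $\mathcal{Q}$-compact before knowing $\{x_n\}$ is bounded. The normalization-and-contradiction route above breaks this circularity, but it requires care in transferring $\mathcal{Q}$-compactness through the inverse operators $E_n^{-1}$ and in verifying that the limit extracted from $\{H_n\tilde x_n\}$ is compatible with stability to produce a genuinely nonzero limit of $\{\tilde x_n\}$. The remaining steps are routine manipulations of the four convergence definitions together with the uniform bound from stability.
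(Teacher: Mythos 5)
The paper does not actually prove this lemma --- it is quoted, without proof, from Theorem 2.55 of \cite{Vainikko1976} --- so your attempt can only be measured against the standard argument. The second half of your proof (once $\{x_n\}$ is known to be bounded) \emph{is} that standard argument and is correct: compact convergence makes $\{H_n x_n\}$ $\mathcal{Q}$-compact, hence $\{E_n x_n\}=\{(E_n+H_n)x_n - H_n x_n\}$ is $\mathcal{Q}$-compact as a difference of $\mathcal{Q}$-compact sequences, and the uniform bound $\Vert E_n^{-1}\Vert\le C$ together with the ordinary convergence of $E_n$ transfers this to $\mathcal{P}$-compactness of $\{x_n\}=\{E_n^{-1}E_n x_n\}$.

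The genuine gap is precisely the step you flag as the main obstacle: boundedness of $\{x_n\}$. Your normalization argument cannot close it. Carried through, it produces a subsequence with $\tilde x_n=x_n/\Vert x_n\Vert$, $\Vert \tilde x_n\Vert =1$, $(E_n+H_n)\tilde x_n\to 0$, and then $\tilde x_n-p_nx^*\to 0$ for some $x^*$ with $\Vert x^*\Vert=1$ and $(E+H)x^*=0$. That is not a contradiction: a sequence of unit vectors converging to a unit vector contradicts nothing, and $E+H$ is not assumed injective --- nor can it be, since in this paper the lemma is applied to $B(k)=A-K(k)$ at the very eigenvalues $k$ where $B(k)$ is singular. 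In fact boundedness is not derivable from the stated hypotheses: take $X_n=X=Y_n=Y$, $p_n=q_n=I$, $E_n=E=I$, $H_n=H$ compact with $Hx^*=-x^*$ for some $x^*\neq 0$, and $x_n=nx^*$; then $(E_n+H_n)x_n=0$ is $\mathcal{Q}$-compact while $\{x_n\}$ is unbounded and therefore not $\mathcal{P}$-compact. The resolution is that Vainikko's definition of regular convergence quantifies only over \emph{bounded} sequences $\{x_n\}$ with $\{F_nx_n\}$ $\mathcal{Q}$-compact; the paper's transcription of the definition drops the word ``bounded.'' With the correct definition your first step becomes a hypothesis rather than something to prove, and the remainder of your argument is a complete proof. (A smaller slip: compact convergence of $H_n$ gives $H_n\tilde x_n-q_ny\to 0$ for some $y\in Y$, not automatically $y=Hx$ for some $x$; identifying $y$ with $Hx^*$ needs the equiboundedness of $\Vert H_n\Vert$, which does follow from compact convergence but requires a sentence of justification.)
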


Using Lemma \ref{regular1}, we show two more sufficient conditions for the regular convergence, which will be used to prove the regular convergence of $B^N_n(k)$ to $B(k)$. 

\begin{lemma}\label{lem1}
    Assume that $X_n \subseteq X$, $Y_n \subseteq Y$, $p_n$ and $q_n$ are projections such that
 $\Vert x - p_n x\Vert_X \rightarrow 0$, 
 $\Vert y - q_n y\Vert_Y\rightarrow 0$ as $n\rightarrow \infty$
for all $x\in X$ and $y\in Y$,
$E$ is invertible, $E_n$ are invertible with 
$\Vert E_n^{-1}\Vert\leq C$ for all $n$,
$E_n$ converges to $E$, $H$ is compact, $H_n$ converges uniformly to $H$.
Then $E_n + H_n$ converges regularly to $E + H$.
\end{lemma}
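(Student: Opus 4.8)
The plan is to deduce the result directly from Lemma \ref{regular1}, whose hypotheses are that the invertible part converges \emph{stably} and the compact part converges \emph{compactly}. Since $E$ is invertible and $H$ is compact by assumption, it suffices to verify that (i) $E_n$ converges stably to $E$, and (ii) $H_n$ converges compactly to $H$. Claim (i) is immediate: $E_n$ converges to $E$ by hypothesis, and $E_n$ is invertible with $\Vert E_n^{-1}\Vert \le C$ for all $n$, which is precisely the definition of stable convergence (one may take $n_0 = 1$). All the work therefore goes into claim (ii).

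For (ii), I would first note that uniform convergence of $H_n$ implies convergence, since $\Vert H_n p_n x - q_n H x\Vert_{Y_n} \le \Vert H_n p_n - q_n H\Vert\,\Vert x\Vert_X \to 0$ for every $x\in X$. It then remains to show that $\{H_n x_n\}$ is $\mathcal{Q}$-compact whenever $x_n\in X_n$ with $\Vert x_n\Vert_{X_n}\le 1$. The two structural facts I would exploit are that $p_n$ is a projection with range $X_n$, so that $p_n x_n = x_n$, and that $X_n\subseteq X$ (with the norm inherited from $X$), so that $Hx_n$ is well defined and $\{x_n\}$ is bounded in $X$. Writing $H_n x_n = H_n p_n x_n$, uniform convergence yields the uniform estimate $\Vert H_n x_n - q_n H x_n\Vert_{Y_n} \le \Vert H_n p_n - q_n H\Vert \to 0$ over the unit ball. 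Since $H$ is compact, $\{Hx_n\}$ is relatively compact in $Y$, so any subsequence has a further subsequence along which $H x_n \to y$ for some $y\in Y$. The triangle inequality $\Vert H_n x_n - q_n y\Vert_{Y_n} \le \Vert H_n x_n - q_n H x_n\Vert_{Y_n} + \Vert q_n(Hx_n - y)\Vert_{Y_n}$ is then the vehicle for establishing $\mathcal{Q}$-compactness.

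The main obstacle is controlling the second term $\Vert q_n(Hx_n - y)\Vert_{Y_n}$, which requires a uniform bound $\sup_n \Vert q_n\Vert < \infty$ that is not stated among the hypotheses. I would recover it from the Banach--Steinhaus theorem: viewing each $q_n$ as an operator on $Y$ (legitimate because $Y_n\subseteq Y$), the assumption $\Vert q_n y\Vert_{Y_n}\to\Vert y\Vert_Y$ forces $\{\Vert q_n y\Vert\}_n$ to be bounded for each fixed $y$, so the family $\{q_n\}$ is pointwise bounded and hence uniformly bounded. With $\Vert q_n\Vert \le C$, the second term is dominated by $C\Vert Hx_n - y\Vert_Y \to 0$ along the chosen subsequence, while the first term tends to $0$ by the uniform estimate; thus $\Vert H_n x_n - q_n y\Vert_{Y_n}\to 0$ and $H_n$ converges compactly to $H$. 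Having verified both hypotheses, Lemma \ref{regular1} delivers the regular convergence of $E_n + H_n$ to $E + H$, which completes the proof.
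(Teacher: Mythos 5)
Your proposal is correct and follows essentially the same route as the paper: verify stable convergence of $E_n$ directly from the hypotheses, establish compact convergence of $H_n$ by combining compactness of $H$ with the uniform convergence estimate and the triangle inequality $\Vert H_n x_n - q_n y\Vert_{Y_n} \leq \Vert (H_n p_n - q_n H)x_n\Vert_{Y_n} + \Vert q_n(Hx_n - y)\Vert_{Y_n}$, then invoke Lemma~\ref{regular1}. The only difference is that you explicitly justify $\sup_n\Vert q_n\Vert<\infty$ via Banach--Steinhaus, a point the paper uses without comment; this is a welcome addition rather than a deviation.
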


\begin{proof}
    By the invertibility of $E_n$, 
$E_n$ converges stably to $E$.
In addition, given $\Vert x_n\Vert_{X_n}\leq 1$, 
by the compactness of $H$, 
for each $\mathbb{N}^{\prime}\subseteq \mathbb{N}$, 
there exists $\mathbb{N}^{\prime\prime}\subseteq \mathbb{N}^{\prime}$ 
and $y\in Y$
such that 
\begin{equation*}
\Vert H x_n - y\Vert_Y \rightarrow 0, 
\quad n\rightarrow \infty, \quad n\in \mathbb{N}^{\prime\prime}.
\end{equation*}
Therefore,
\begin{align*}
\Vert H_n x_n - q_n y\Vert_{Y_n} &\leq \Vert (H_n p_n - q_n H)x_n\Vert_{Y_n} + \Vert q_n(H x_n - y)\Vert_{Y_n} 
\\
&\leq \Vert H_n p_n - q_n H\Vert\, \Vert x_n\Vert_{X_n} + \Vert q_n\Vert\, \Vert H x_n - y\Vert_Y \rightarrow 0,
\quad n\rightarrow \infty, \quad n\in \mathbb{N}^{\prime\prime},
\end{align*}
which implies that $H_n$ converges compactly to $H$. Therefore, $E_n+H_n$ converges regularly to $E+H$ by Lemma~\ref{regular1}.
\end{proof}

\begin{lemma}\label{lem2}
    Assume that $X_n \subseteq X$, $Y_n \subseteq Y$, $p_n$ and $q_n$ are projections such that
 $\Vert x - p_n x\Vert_X \rightarrow 0$, 
 $\Vert y - q_n y\Vert_Y\rightarrow 0$ as $n\rightarrow \infty$
for all $x\in X$ and $y\in Y$,
$E$ is invertible, $E_n$ are invertible with 
$\Vert E_n^{-1}\Vert\leq C$ for all $n$, 
$E_n$ converges to $E$,
$H$ is compact, and $H_n = q_n H|_{X_n}$. Then
$E_n + H_n$ converges regularly to $E + H$.
\end{lemma}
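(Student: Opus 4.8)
The plan is to reduce the statement to Lemma~\ref{regular1} along exactly the same lines as the proof of Lemma~\ref{lem1}; the only change in hypotheses is that the uniform convergence of $H_n$ is now replaced by the explicit structural assumption $H_n = q_n H|_{X_n}$, so my real task is to extract \emph{compact} convergence of $H_n$ from this form. Since $E$ is invertible, $E_n$ is invertible with $\Vert E_n^{-1}\Vert \leq C$, and $E_n$ converges to $E$, the operator $E_n$ converges \emph{stably} to $E$ directly from the definition. Thus once I verify that $H_n$ converges compactly to $H$, Lemma~\ref{regular1} (with $E$ invertible and $H$ compact) immediately yields regular convergence of $E_n + H_n$ to $E + H$.

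First I would record the uniform bound $\sup_n \Vert q_n\Vert =: C_q < \infty$: the hypothesis $\Vert q_n y\Vert_{Y_n}\to \Vert y\Vert_Y$ makes $\{q_n y\}_n$ bounded for each fixed $y$, so the uniform boundedness principle applies. For plain convergence of $H_n$ I would use that $p_n x\in X_n$, hence $H_n p_n x = q_n H p_n x$, and estimate
\begin{equation*}
\Vert H_n p_n x - q_n H x\Vert_{Y_n} = \Vert q_n H(p_n x - x)\Vert_{Y_n} \leq C_q \Vert H\Vert\, \Vert p_n x - x\Vert_X \to 0,
\end{equation*}
invoking $\Vert p_n x - x\Vert_X \to 0$. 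This establishes that $H_n$ converges to $H$.

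The substantive step is $\mathcal{Q}$-compactness of $\{H_n x_n\}$ for an arbitrary sequence with $\Vert x_n\Vert_{X_n}\leq 1$. Because $X_n\subseteq X$ carries the norm inherited from $X$, these $x_n$ form a bounded sequence in $X$, so compactness of $H$ gives, along any subsequence $\mathbb{N}'$, a further subsequence $\mathbb{N}''$ and $y\in Y$ with $\Vert H x_n - y\Vert_Y \to 0$. Using $x_n\in X_n$ once more, so that $H_n x_n = q_n H x_n$, I would then conclude
\begin{equation*}
\Vert H_n x_n - q_n y\Vert_{Y_n} = \Vert q_n(H x_n - y)\Vert_{Y_n} \leq C_q\,\Vert H x_n - y\Vert_Y \to 0, \quad n\in \mathbb{N}'',
\end{equation*}
which is precisely $\mathcal{Q}$-compactness. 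Hence $H_n$ converges compactly to $H$, and Lemma~\ref{regular1} finishes the argument. I expect no serious obstacle: the whole proof rests on the two identities $H_n p_n x = q_n H p_n x$ and $H_n x_n = q_n H x_n$, valid because the relevant arguments already lie in $X_n$. The only point requiring care is the implicit identification of the $X_n$-norm with the restriction of the $X$-norm, which is what allows compactness of $H$ to be applied to the bounded family $\{x_n\}$.
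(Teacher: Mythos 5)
Your proposal is correct and follows essentially the same route as the paper: you exploit the identities $H_n p_n x = q_n H p_n x$ and $H_n x_n = q_n H x_n$ to get plain and compact convergence of $H_n$, note the stable convergence of $E_n$ from the stated hypotheses, and conclude via Lemma~\ref{regular1}. The only additions are harmless elaborations (the uniform boundedness principle for $\sup_n\Vert q_n\Vert$ and the remark on the inherited norm), which the paper leaves implicit.
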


\begin{proof}
    $H_n$ converges to $H$ because
\begin{equation*}
\Vert H_n p_n x - q_n H x\Vert_{Y_n} = \Vert q_n H(p_n x - x)\Vert_{Y_n} \leq \Vert q_n\Vert \,\Vert H\Vert\, \Vert p_n x - x\Vert_X\rightarrow 0,\quad n\rightarrow \infty,\quad \forall x\in X.
\end{equation*}
Since
\begin{equation*}
\Vert H_n x_n - q_n y\Vert_{Y_n} = \Vert q_n(H x_n - y)\Vert_{Y_n} \leq \Vert q_n\Vert\, \Vert H x_n - y\Vert_Y \rightarrow 0,
\quad n\rightarrow \infty, \quad n\in \mathbb{N}^{\prime\prime},
\end{equation*}
the stable convergence of $E_n$ and the compact convergence of $H_n$
can be proved in the same way as the previous lemma.
\end{proof}

Now we introduce the finite element approximation. Let $\mathcal{T}_n:=\mathcal{T}_{h_n}$ be a regular triangular mesh for $\Omega$ with mesh size $h_n\rightarrow 0^+$ as $n\rightarrow\infty$. Let $V_n\subseteq H^1(\Omega)$ be the linear Lagrange finite element space on $\mathcal{T}_n$.  For $k\in\Lambda$, define $B^N_n(k) : V_n \rightarrow V_n$ such that
\begin{equation}\label{BnN}
    (B^N_n(k)u_n,v_n)_1 = (\nabla u_n, \nabla v_n) - k^2(u_n,v_n) - \langle T^N(k)u_n,v_n\rangle_{\Gamma_R},\quad \forall v_n\in V_n.
\end{equation}
In a similar way, we define $A^N_n : V_n\rightarrow V_n$ and $K^N_n(k):V_n\rightarrow V_n$, respectively, as the finite element approximation of $A^N$ and $K^N(k)$ such that 
\begin{equation}\nonumber
    B^N_n(k) = A^N_n - K^N_n(k).
\end{equation}

In the next a few lemmas, we prove the properties of the relevant operators.

\begin{lemma}\label{lemma1}
On each compact $\Lambda_0\subseteq \Lambda$,
$B^N(\cdot)$ and $B^N_n(\cdot)$ are equibounded, i.e.,
\begin{equation}\nonumber
    \Vert B^N(k)\Vert\leq C,\quad \Vert B^N_n(k)\Vert \leq C,
    \quad \forall k\in \Lambda_0, N\in \mathbb{N}, n\in \mathbb{N}.
\end{equation}
\end{lemma}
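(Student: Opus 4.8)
The plan is to estimate the operator norm directly from the defining bilinear forms \eqref{BN} and \eqref{BnN}, exploiting the Hilbert-space identity $\|B^N(k)u\|_1 = \sup_{\|v\|_1 = 1}|(B^N(k)u,v)_1|$. Splitting into the three contributions, the volume terms are controlled by Cauchy--Schwarz, namely $|(\nabla u,\nabla v)| \le \|u\|_1\|v\|_1$ and $|k^2(u,v)| \le |k|^2\|u\|_1\|v\|_1$, so the entire burden falls on the boundary term $\langle T^N(k)u,v\rangle_{\Gamma_R}$. For this term I would use the duality pairing $|\langle T^N(k)u,v\rangle_{\Gamma_R}| \le \|T^N(k)u\|_{H^{-1/2}(\Gamma_R)}\|v\|_{H^{1/2}(\Gamma_R)}$, invoke the boundedness \eqref{TNk}, and finish with the trace inequality $\|w\|_{H^{1/2}(\Gamma_R)} \le C_{\mathrm{tr}}\|w\|_1$, whose constant depends only on $\Omega$. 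This yields a pointwise bound $\|B^N(k)\| \le 1 + |k|^2 + C\,C_{\mathrm{tr}}^2$ for each fixed $k$ and $N$.

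The crux is upgrading this to a bound uniform in both $N$ and $k \in \Lambda_0$. Uniformity in $N$ is essentially structural: by \eqref{Tu,v} the $n$-th Fourier coefficient of the symbol of $T^N(k)$ is $\pi\bigl(n - kR\,H^{(1)}_{n-1}(kR)/H^{(1)}_n(kR)\bigr)$, with the sum truncated at $n = N$. The term $\pi n$ is a multiplier of exact order one, hence bounded $H^{1/2}(\Gamma_R) \to H^{-1/2}(\Gamma_R)$ with a constant independent of $N$; by Lemma 3.1 of \cite{Hsiao2011} the Hankel ratio is bounded uniformly in $n$, so the remaining part is a bounded multiplier on $L^2(\Gamma_R)$. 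Since passing from $T(k)$ to $T^N(k)$ merely zeros the modes $n > N$, which cannot increase any of these norms, the constant in \eqref{TNk} may be taken independent of $N$.

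The main obstacle, and the step deserving genuine care, is the uniformity of the constant $C(kR)$ over the compact set $\Lambda_0$. I would argue as follows. For each fixed $n$ the ratio $H^{(1)}_{n-1}(kR)/H^{(1)}_n(kR)$ is holomorphic on $\Lambda$, because the definition $\Lambda = \mathbb{C}\setminus(\mathbb{R}^-\cup Z)$ excludes precisely the zeros of the Hankel functions appearing in the denominators; hence on the compact $\Lambda_0 \subseteq \Lambda$ each such ratio is continuous and bounded. For large $n$, the large-order asymptotics of Hankel functions give that this ratio decays like $|k|R/(2n)$ uniformly for $k$ in the bounded set $\Lambda_0$, so the ratios tend to zero uniformly in $k$. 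Combining the continuity bound for the finitely many low-order terms with this uniform decay yields $\sup_{k\in\Lambda_0}C(kR) < \infty$; since $|k|$ is also bounded on $\Lambda_0$, the estimate of the first paragraph holds with a constant depending only on $\Lambda_0$, $R$, and $\Omega$.

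Finally, the discrete estimate for $B^N_n(k)$ requires no new ideas. For $u_n,v_n \in V_n \subseteq H^1(\Omega)$ the bilinear form in \eqref{BnN} is just the restriction of the one in \eqref{BN}, and both the trace inequality and the bound \eqref{TNk} apply verbatim with the same constants, because $V_n$ is a closed subspace of $H^1(\Omega)$ and the trace constant $C_{\mathrm{tr}}$ is inherited from $H^1(\Omega)$ and is therefore independent of $n$. Taking the supremum over $\|v_n\|_1 = 1$ with $v_n \in V_n$ then gives $\|B^N_n(k)\| \le 1 + |k|^2 + C\,C_{\mathrm{tr}}^2$ with the same $N$- and $k$-uniform constant, which would complete the proof.
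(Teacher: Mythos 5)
Your proof is correct and follows essentially the same route as the paper, which disposes of this lemma in one line by citing the boundedness \eqref{TNk} together with the definitions \eqref{BN} and \eqref{BnN}. In fact you supply more detail than the paper does: the uniformity of the constant in $N$ (truncation only zeros Fourier modes) and in $k\in\Lambda_0$ (continuity of the Hankel ratios on the compact set plus their large-order decay) is exactly the content of ``equibounded'' that the paper leaves implicit, and your treatment of it is sound.
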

\begin{proof}
    By the boundedness \eqref{TNk}, the definitions \eqref{BN} and \eqref{BnN}, $B^N(k)$ and
    $B^N_n(k)$ are bounded.
\end{proof}

\begin{lemma}\label{invertible compact}
    For each $k\in\Lambda$, $K^N(k)$
    is compact. The operators $A^N$ and $A^N_n$ are invertible with
    \begin{equation*}
        \Vert (A^N)^{-1}\Vert\leq C,\quad
        \Vert (A^N_n)^{-1}\Vert\leq C,\quad
        \forall N\in\mathbb{N}, n\in \mathbb{N}.
    \end{equation*}
\end{lemma}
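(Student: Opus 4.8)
The plan is to handle the three assertions by separate mechanisms: compactness of $K^N(k)$ will come from a compact Sobolev embedding, whereas invertibility of $A^N$ and $A^N_n$ together with the uniform bounds on their inverses will come from coercivity.

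First I would establish compactness of $K^N(k)$. The excerpt already records that the extension of $K^N(k)$ to $H^{1/2+\epsilon}(\Omega)$ is bounded; denote this extension by $\widetilde{K}^N(k) : H^{1/2+\epsilon}(\Omega)\rightarrow H^1(\Omega)$. The key observation is the factorization $K^N(k) = \widetilde{K}^N(k)\circ \iota$, where $\iota : H^1(\Omega)\hookrightarrow H^{1/2+\epsilon}(\Omega)$ is the natural inclusion. Since $0<\epsilon<1/2$ forces $1/2+\epsilon<1$, the Rellich--Kondrachov theorem on the bounded Lipschitz domain $\Omega$ guarantees that $\iota$ is compact. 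As the composition of a compact operator with a bounded operator is compact, $K^N(k):H^1(\Omega)\rightarrow H^1(\Omega)$ is compact for each $k\in\Lambda$.

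Next I would prove the invertibility of $A^N$ by checking that its defining sesquilinear form is bounded and coercive and then invoking the Lax--Milgram lemma. Boundedness follows exactly as in the estimate preceding \eqref{B split}, since the finite sum in $(A^N u,v)_1$ is dominated by the corresponding infinite sum and hence by $C\Vert u\Vert_1\Vert v\Vert_1$, uniformly in $N$. For coercivity I would test with $v=u$: because $n\geq 0$ and $|a_n^u|^2+|b_n^u|^2\geq 0$, the summation term is real and nonnegative, so
\begin{equation*}
(A^N u,u)_1 = \Vert \nabla u\Vert_{L^2(\Omega)}^2 + \Vert u\Vert_{L^2(\Omega)}^2 + \pi\sum_{n=0}^{N}{\!}^{{}^{\,\scriptstyle{\prime}}} n(|a_n^u|^2+|b_n^u|^2) \geq \Vert u\Vert_1^2.
\end{equation*}
The coercivity constant is $1$, independent of $N$, so Lax--Milgram yields invertibility of $A^N$ together with the uniform bound $\Vert (A^N)^{-1}\Vert\leq 1\leq C$.

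Finally, invertibility of $A^N_n$ is inherited from the same coercivity. For $u_n\in V_n\subseteq H^1(\Omega)$, the Galerkin restriction preserves the form values, so $(A^N_n u_n,u_n)_1 = (A^N u_n,u_n)_1 \geq \Vert u_n\Vert_1^2$, i.e. $A^N_n$ is coercive on $V_n$ with the same constant $1$. Since $V_n$ is finite-dimensional, injectivity plus dimension counting (or Lax--Milgram on the closed subspace) gives invertibility with $\Vert (A^N_n)^{-1}\Vert\leq 1\leq C$, uniformly in both $N$ and $n$. The only points requiring mild care are making the factorization in the compactness argument precise and confirming that the coercivity constant genuinely does not degrade as $N,n\to\infty$; everything else is routine, so I do not anticipate a serious obstacle.
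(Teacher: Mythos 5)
Your proposal is correct and follows essentially the same route as the paper: the paper's proof is a one-line reference to the argument of Lemma~\ref{holomorphicB}, which establishes invertibility via the coercivity estimate $(Au,u)_1\geq \Vert u\Vert_1^2$ and compactness via the boundedness of the extension on $H^{1/2+\epsilon}(\Omega)$ combined with the compact embedding $H^1(\Omega)\hookrightarrow H^{1/2+\epsilon}(\Omega)$ --- exactly your two mechanisms. You merely spell out the details the paper leaves implicit, in particular the Galerkin restriction argument showing the coercivity constant is inherited by $A^N_n$ uniformly in $N$ and $n$.
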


\begin{proof}
    The proof is similar to that of Lemma \ref{holomorphicB}.
\end{proof}

\begin{lemma}\label{holomorphic}
    For $N$ sufficiently large, $B^N(\cdot)$ is a holomorphic Fredholm operator function
    with index 0 on $\Lambda$,
    and its resolvent set is non-empty. 
    Moreover,
    $B^N_n(\cdot)$ is a holomorphic Fredholm operator function
    with index 0 on $\Lambda$ for all $N$ and $n$.
\end{lemma}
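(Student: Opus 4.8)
The plan is to mirror the proof of Lemma~\ref{holomorphicB}, replacing the full DtN series by its truncation and invoking the properties of $A^N$, $A^N_n$ and $K^N(k)$ already recorded in Lemma~\ref{invertible compact}.

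\emph{Holomorphy.} In contrast to $T(k)$, the truncated symbol in \eqref{BN} is a \emph{finite} sum over $n=0,\dots,N$, each term being $k\,{H^{(1)}_n}'(kR)/H^{(1)}_n(kR)$ times a fixed rank-one form in the boundary Fourier modes. On $\Lambda=\mathbb C\setminus(\mathbb R^-\cup Z)$ every denominator $H^{(1)}_n(kR)$ is nonzero, so each term is holomorphic, and a finite sum of holomorphic terms is holomorphic. Differentiating \eqref{BN} term by term and using the recurrence \eqref{recur} exactly as in Lemma~\ref{holomorphicB} produces a bounded $B^N{}'(k):H^1(\Omega)\to H^1(\Omega)$; since the sum is finite there is no tail to control. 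The identical computation applies to $B^N_n(\cdot)$ defined by \eqref{BnN}.

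\emph{Fredholmness of index $0$.} Using the splitting $B^N(k)=A^N-K^N(k)$ and Lemma~\ref{invertible compact}, $A^N$ is invertible with uniformly bounded inverse while $K^N(k)$ is compact (its $H^{1/2+\epsilon}(\Omega)$-extension is bounded and $H^1(\Omega)\hookrightarrow H^{1/2+\epsilon}(\Omega)$ compactly). An invertible operator minus a compact one is Fredholm of index $0$, so $B^N(k)$ is Fredholm of index $0$ for every $k\in\Lambda$ and every $N$. For $B^N_n(k)$ this is softer still: it acts on the finite-dimensional space $V_n$, where every operator is Fredholm of index $0$, and holomorphy is inherited entrywise from $T^N(k)$. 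Hence $B^N_n(\cdot)$ is holomorphic Fredholm of index $0$ for all $N,n$, with no restriction on $N$.

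\emph{Non-emptiness of $\rho(B^N)$.} This is the one genuinely delicate point, and I expect it to be the main obstacle. Because $A-A^N$ (the order-one tail of the DtN form) does \emph{not} tend to $0$ in operator norm, one cannot simply perturb off the full operator $B$ at a fixed $k$ in the upper half-plane. Instead I would exhibit a point of invertibility directly, by testing at $k=i\tau$, $\tau>0$, which lies in $\Lambda$ since $H^{(1)}_n$ has no zeros in the open upper half-plane. There $k^2=-\tau^2$, and the DtN symbol is real and negative: expressing $H^{(1)}_n(iz)$ through the modified Bessel function $K_n$ and using $K_n>0$, $K_n'<0$ gives
\begin{equation*}
k\,\frac{{H^{(1)}_n}'(kR)}{H^{(1)}_n(kR)}\Big|_{k=i\tau}=\tau\,\frac{K_n'(\tau R)}{K_n(\tau R)}<0,\qquad n=0,1,\dots
\end{equation*}
Consequently $-\langle T^N(i\tau)u,u\rangle_{\Gamma_R}\ge 0$, and \eqref{BN} yields
\begin{equation*}
(B^N(i\tau)u,u)_1=\|\nabla u\|_{L^2(\Omega)}^2+\tau^2\|u\|_{L^2(\Omega)}^2-\langle T^N(i\tau)u,u\rangle_{\Gamma_R}\ \ge\ \min(1,\tau^2)\,\|u\|_1^2 .
\end{equation*}
Thus $B^N(i\tau)$ is coercive, hence invertible by Lax--Milgram, so $i\tau\in\rho(B^N)$. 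The sign computation is uniform in $N$, so this holds for every $N$ and a fortiori for $N$ sufficiently large; the same identity also reproves the invertibility of $B$ at $i\tau$ term by term, so the argument is robust.
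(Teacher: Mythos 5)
Your proof is correct, and for the holomorphy and Fredholm-index-zero parts it coincides with the paper's argument (same splitting $B^N=A^N-K^N(k)$, invertible $A^N$ minus compact $K^N(k)$, finite-dimensionality of $V_n$ for the discrete operator). Where you genuinely diverge is the non-emptiness of $\rho(B^N)$: the paper simply cites \cite{Hsiao2011} for the invertibility of $B^N(k)$ when ${\rm Im}(k)>0$ and $N$ is sufficiently large, whereas you prove invertibility directly at $k=i\tau$ by observing that the truncated DtN symbol there equals $\tau K_n'(\tau R)/K_n(\tau R)<0$, so that $-\langle T^N(i\tau)u,u\rangle_{\Gamma_R}\ge 0$ and $B^N(i\tau)$ is coercive. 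Your sign computation checks out (using $H^{(1)}_n(iz)=\tfrac{2}{\pi}i^{-(n+1)}K_n(z)$ one gets exactly $i\tau\,{H^{(1)}_n}'(i\tau R)/H^{(1)}_n(i\tau R)=\tau K_n'(\tau R)/K_n(\tau R)$, and $K_n>0$, $K_n'<0$ on $(0,\infty)$), and $i\tau\in\Lambda$ since $H^{(1)}_n$ has no zeros on the positive imaginary axis. What your route buys is a self-contained, elementary argument that removes the ``$N$ sufficiently large'' restriction for the resolvent statement (it holds for every $N$), and your preliminary remark is also apt: since $A-A^N$ does not vanish in operator norm, one cannot obtain invertibility of $B^N(k)$ by a naive norm perturbation of $B(k)$, which is presumably why the paper falls back on the external citation. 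The only cosmetic caveat is that coercivity here should be read as $\Re\,(B^N(i\tau)u,u)_1\ge\min(1,\tau^2)\Vert u\Vert_1^2$ before invoking Lax--Milgram, which is exactly what your identity gives since every term is real and nonnegative.
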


\begin{proof}
    Similar to the proof of Lemma~\ref{holomorphicB}, $B^N(\cdot)$ and $B^N_n(\cdot)$ are holomorphic operator functions on $\Lambda$. Since $A^N$ is invertible and $K^N(k)$ is compact, $B^N(k)$ is Fredholm. When $\text{Im}(k)>0$ and $N$ is sufficiently large, $B^N(k)$ is invertible \cite{Hsiao2011}. Therefore, $\rho(B^N)$ is non-empty. Since $V_n$ is finite dimensional,
    $B^N_n(k)$ is Fredholm of index $0$ for $k \in \Lambda$.
\end{proof}

We are now ready to prove the regular convergence of $B_n^N(k)$ to $B(k)$. We first show the convergence of $B^N(k)$ to $B(k)$ by taking $X^N = Y^N = X = Y = H^1(\Omega)$ and setting the projections $p^N$ and $q^N$ to be the identity operator from $H^1(\Omega)$ to itself.

\begin{lemma}\label{regular N}
    For $k\in \Lambda$, $B^N(k)$ converges regularly to $B(k)$.
\end{lemma}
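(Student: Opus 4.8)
The plan is to invoke Lemma~\ref{lem1} with the truncation order $N$ playing the role of the abstract index, the identifications $E = A$, $E_n = A^N$, $H = -K(k)$, $H_n = -K^N(k)$, and the spaces $X_n = Y_n = X = Y = H^1(\Omega)$ with $p^N = q^N$ equal to the identity. With this choice the projection hypotheses $\Vert x - p^N x\Vert_1 \to 0$ and $\Vert y - q^N y\Vert_1 \to 0$ hold trivially, and by \eqref{B split} and \eqref{BN split} regular convergence of $A^N - K^N(k)$ to $A - K(k)$ is exactly regular convergence of $B^N(k)$ to $B(k)$. Of the remaining hypotheses, the invertibility of $A$, the uniform bound $\Vert (A^N)^{-1}\Vert \le C$, and the compactness of $K(k)$ are already supplied by Lemmas~\ref{holomorphicB} and \ref{invertible compact}. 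Thus I only need to establish (i) $A^N$ converges to $A$ and (ii) $K^N(k)$ converges uniformly to $K(k)$.

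For (i), I fix $u \in H^1(\Omega)$ and compute the residual pairing from the definitions of $A$ and $A^N$:
\begin{equation}\nonumber
((A - A^N)u, v)_1 = \pi\sum_{n=N+1}^{\infty}{\!}^{{}^{\,\scriptstyle{\prime}}} n\,(a_n^u\overline{a_n^v} + b_n^u\overline{b_n^v}).
\end{equation}
Bounding $n \le (1+n^2)^{1/2}$, applying Cauchy--Schwarz, and using the trace estimate $\Vert v\Vert_{H^{1/2}(\Gamma_R)} \le C\Vert v\Vert_1$, the right-hand side is dominated by $C\,\big(\sum_{n>N}(1+n^2)^{1/2}(|a_n^u|^2 + |b_n^u|^2)\big)^{1/2}\Vert v\Vert_1$. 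Since the trace $u|_{\Gamma_R}$ lies in $H^{1/2}(\Gamma_R)$, the displayed tail of its norm tends to zero, whence $\Vert (A - A^N)u\Vert_1 \to 0$ as $N \to \infty$ for each fixed $u$.

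The crux is (ii), the \emph{uniform} convergence. The corresponding residual is
\begin{equation}\nonumber
((K(k) - K^N(k))u, v)_1 = \pi\sum_{n=N+1}^{\infty}{\!}^{{}^{\,\scriptstyle{\prime}}} kR\,\frac{H^{(1)}_{n-1}(kR)}{H^{(1)}_n(kR)}(a_n^u\overline{a_n^v} + b_n^u\overline{b_n^v}).
\end{equation}
By Lemma~3.1 of \cite{Hsiao2011} the factor $kR\,H^{(1)}_{n-1}(kR)/H^{(1)}_n(kR)$ is bounded uniformly in $n$, so Cauchy--Schwarz gives the bound $C\,\big(\sum_{n>N}(|a_n^u|^2 + |b_n^u|^2)\big)^{1/2}\Vert v\Vert_{L^2(\Gamma_R)}$. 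The decisive point is to extract a rate from the embedding $H^{1/2}(\Gamma_R)\hookrightarrow L^2(\Gamma_R)$: factoring $(1+(N+1)^2)^{-1/2}$ out of the tail bounds it by $C N^{-1}\Vert u\Vert_{H^{1/2}(\Gamma_R)}^2$, so that $\Vert (K(k) - K^N(k))u\Vert_1 \le C N^{-1/2}\Vert u\Vert_1$. This estimate is uniform over the unit ball, i.e., $\Vert K^N(k) - K(k)\Vert \le C N^{-1/2} \to 0$.

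With (i) and (ii) in hand every hypothesis of Lemma~\ref{lem1} is met, and the lemma yields the regular convergence of $B^N(k)$ to $B(k)$. I expect the main obstacle to be step (ii): the zeroth-order Hankel quotients are only uniformly bounded, not small, so compact convergence alone would be routine but uniform convergence requires exploiting the extra $H^{1/2}(\Gamma_R)$-regularity of the traces, which furnishes the $N^{-1/2}$ decay of the truncated tail independently of $u$.
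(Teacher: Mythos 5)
Your proposal is correct and follows essentially the same route as the paper: the same splitting $B=A-K$, $B^N=A^N-K^N$ with identity projections, pointwise convergence of $A^N$ via the tail of the $H^{1/2}(\Gamma_R)$-trace norm, uniform convergence of $K^N$ with the $N^{-1/2}$ rate extracted from the weight $(1+n^2)^{1/2}$, and the conclusion via Lemma~\ref{lem1}. The only cosmetic difference is that the paper tests against $v=(A-A^N)u$ (resp.\ $v=(K(k)-K^N(k))u$) to get the norm estimate directly, whereas you bound the bilinear form for all $v$ and conclude by duality; these are equivalent.
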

\begin{proof}
    For $u\in H^1(\Omega)$, let $v = (A - A^N)u$. We have that
    \begin{align*}
        &\Vert q^N A u - A^N p^N u\Vert^2_1 = \Vert (A - A^N)u\Vert_1^2 = (Au,v)_1 - (A^Nu,v)_1
        \\
        =\;&\pi\sum_{n=N+1}^{+\infty}n
        (a_n^{u}\overline{\displaystyle a_n^{v}} + b_n^{u}\overline{\displaystyle b_n^{v}})
        \leq C\left(\sum_{n=N+1}^{+\infty}
        (n^2+1)^{\frac{1}{2}}(|a_n^{u}|^2 + |b_n^{u}|^2)\right)^{\!\!\!\frac{1}{2}} \Vert v\Vert_1.
    \end{align*}
    Therefore,
    \begin{equation*}
    \Vert q^N A u - A^N p^N u\Vert_1\leq C\gamma_{1/2}(N,u)\Vert u\Vert_1\rightarrow 0,\quad N\rightarrow \infty,\quad \forall u\in H^1(\Omega).
    \end{equation*}
    Thus $A^N$ converges to $A$. On the other hand,
    letting $v = (K(k) - K^N(k))u$, we have that
    \begin{align*}
    &\Vert q^N K(k) u - K^N(k) p^N u\Vert_1^2 = \Vert (K(k) - K^N(k))u\Vert_1^2 = (K(k)u,v)_1 - (K^N(k)u,v)_1
    \\
    =\;&\pi\sum_{n=N+1}^{+\infty}
    kR\frac{H^{(1)}_{n-1}(kR)}{H^{(1)}_n(kR)}
    (a_n^{u}\overline{\displaystyle a_n^{v}} + b_n^{u}\overline{\displaystyle b_n^{v}})
    \leq
    C\left(\sum_{n=N+1}^{+\infty}
    (|a_n^{u}|^2 + |b_n^{u}|^2)\right)^{\!\!\!\frac{1}{2}}\Vert v\Vert_{L^2(\Gamma_R)}
    \\
    \leq\;&
    \frac{C}{\sqrt{N}}\left(\sum_{n=N+1}^{+\infty}
    (n^2+1)^{\frac{1}{2}}(|a_n^{u}|^2 + |b_n^{u}|^2)\right)^{\!\!\!\frac{1}{2}}\Vert v\Vert_1
    \leq \frac{C}{\sqrt{N}}\Vert u\Vert_1\Vert v\Vert_1.
    \end{align*}
    Therefore,
    \begin{equation*}
    \Vert q^N K(k) u - K^{N}(k)p^N u\Vert_1 \leq \frac{C}{\sqrt{N}}\Vert u\Vert_1,\quad \forall u\in H^1(\Omega),
    \end{equation*}
    which implies that $K^N(k)$ converges uniformly to $K(k)$.
    By Lemma \ref{lem1}, $B^N(k)$ converges regularly to $B(k)$.
\end{proof}

For the convergence of $B^N_n(k)$ to $B^N(k)$,
we set $X = Y = H^1(\Omega)$, $X_n = Y_n = V_n$.
Let $q_n : H^1(\Omega)\rightarrow V_n$ be the $H^1$ projection 
\begin{equation}\nonumber
    (q_n u, v_n)_1 = (u,v_n)_1,\quad \forall v_n\in V_n,
\end{equation}
and $p_n : H^1(\Omega)\rightarrow V_n$ be defined by
\begin{equation}\nonumber
    (A^N p_n u, v_n)_1 = (A^N u, v_n)_1,\quad \forall v_n\in V_n.
\end{equation}
Since $(A^N u,v)_1$ is an inner product on $H^1(\Omega)$ and $V_n$ is a closed subspace of $H^1(\Omega)$, the projection $p_n$ is well-defined.

\begin{lemma}\label{regular n}
    For $k\in \Lambda$ and $N$ sufficiently large, 
    $B^N_n(k)$ converges regularly to $B^N(k)$.
\end{lemma}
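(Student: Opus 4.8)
The plan is to recognize this statement as a direct instance of Lemma~\ref{lem2}, applied with $E = A^N$, $E_n = A^N_n$, $H = -K^N(k)$, and $H_n = -K^N_n(k)$, so that $E + H = B^N(k)$ and $E_n + H_n = B^N_n(k)$. The hypotheses of Lemma~\ref{lem2} that must be checked are: the invertibility of $A^N$ and the uniform invertibility of $A^N_n$; the approximation properties of the projections $p_n$ and $q_n$; the convergence $A^N_n \to A^N$; the compactness of $K^N(k)$; and the structural representation $K^N_n(k) = q_n K^N(k)|_{V_n}$. The first and fourth of these are already supplied by Lemma~\ref{invertible compact}, which gives $A^N$ invertible, $\Vert (A^N_n)^{-1}\Vert \leq C$ uniformly in $n$, and $K^N(k)$ compact.

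Next I would verify the approximation properties. For $q_n$ this is the standard density of the linear Lagrange space in $H^1(\Omega)$, giving $\Vert u - q_n u\Vert_1 \to 0$. For $p_n$, the coercivity $(A^N u, u)_1 \geq \Vert u\Vert_1^2$ together with the boundedness of $A^N$ shows that $(A^N\cdot,\cdot)_1$ is an inner product equivalent to the $H^1$ inner product; hence $p_n$ is the $A^N$-orthogonal projection onto $V_n$, and by C\'ea-type quasi-optimality and norm equivalence it satisfies $\Vert u - p_n u\Vert_1 \leq C \inf_{v_n \in V_n}\Vert u - v_n\Vert_1 \to 0$. Both $p_n$ and $q_n$ restrict to the identity on $V_n$, so they are genuine projections, and $\Vert p_n u\Vert_1 \to \Vert u\Vert_1$, $\Vert q_n u\Vert_1 \to \Vert u\Vert_1$ follow.

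The core step is the pair of algebraic identities that reflect the fact that $A^N_n$ and $K^N_n(k)$ are the Galerkin restrictions of the same bilinear forms that define $A^N$ and $K^N(k)$: for $w_n, v_n \in V_n$ one has $(A^N_n w_n, v_n)_1 = (A^N w_n, v_n)_1$ and $(K^N_n(k) w_n, v_n)_1 = (K^N(k) w_n, v_n)_1$. Taking $w_n = p_n u$ and invoking the definition of $p_n$ gives $(A^N_n p_n u, v_n)_1 = (A^N u, v_n)_1 = (q_n A^N u, v_n)_1$ for every $v_n \in V_n$; since both $A^N_n p_n u$ and $q_n A^N u$ lie in $V_n$, we conclude $A^N_n p_n u = q_n A^N u$. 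Thus $A^N_n$ converges to $A^N$ with identically vanishing error, which combined with the uniform bound on $(A^N_n)^{-1}$ makes the convergence stable. Similarly, the definition of the $H^1$ projection $q_n$ forces $K^N_n(k) w_n = q_n K^N(k) w_n$ for all $w_n \in V_n$, that is, $K^N_n(k) = q_n K^N(k)|_{V_n}$, which is exactly the assumption $H_n = q_n H|_{X_n}$ of Lemma~\ref{lem2}.

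With these verifications in place, Lemma~\ref{lem2} applies and yields that $A^N_n - K^N_n(k)$ converges regularly to $A^N - K^N(k)$, i.e.\ $B^N_n(k)$ converges regularly to $B^N(k)$; since the argument holds for each fixed $N$, the stated hypothesis that $N$ be sufficiently large is more than sufficient. I expect the only genuinely delicate point to be the bookkeeping that identifies $A^N_n$ and $K^N_n(k)$ as the restrictions to $V_n$ of the bilinear forms of $A^N$ and $K^N(k)$ and then matches these restrictions to the two projections. Once the identities $A^N_n p_n = q_n A^N$ and $K^N_n(k) = q_n K^N(k)|_{V_n}$ are secured, the regular convergence is immediate from Lemma~\ref{lem2}.
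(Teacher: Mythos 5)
Your proposal is correct and follows essentially the same route as the paper: verify the hypotheses of Lemma~\ref{lem2} with $E=A^N$, $E_n=A^N_n$, $H=-K^N(k)$, $H_n=-K^N_n(k)$, using the coercivity of $A^N$ for the quasi-optimality of $p_n$ and the Galerkin identities to get $A^N_n p_n = q_n A^N$ and $K^N_n(k)=q_nK^N(k)|_{V_n}$. Your writeup is, if anything, slightly more careful than the paper's on the point that both $A^N_n p_n u$ and $q_n A^N u$ lie in $V_n$ before concluding their equality, and on noting that the hypothesis ``$N$ sufficiently large'' is not actually needed for this particular lemma.
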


\begin{proof}
    It is clear that
    \begin{equation*}
    \Vert u - q_n u\Vert_1 \rightarrow 0,\quad n\rightarrow \infty,\quad \forall u\in H^1(\Omega).
    \end{equation*}
    By the coercivity and boundedness of $A^N$, we have that
    \begin{equation*}
    \Vert u - p_n u\Vert_1^2 \leq (A^N(u - p_n u),u - p_n u)_1 = (A^N(u - p_n u),u - v_n)_1\leq C\Vert u - p_n u\Vert_1 \Vert u - v_n\Vert_1
    \end{equation*}
    for all $v_n \in V_n$. Hence
    \begin{equation*}
    \Vert u - p_n u\Vert_1 \leq C\inf_{v_n\in V_n}\Vert u - v_n\Vert_1\rightarrow 0,\quad
    n\rightarrow \infty,\quad \forall u\in H^1(\Omega).
    \end{equation*}
    It holds that
    \begin{equation*}
        (q_nA^Nu,v_n)_1 = (A^Nu,v_n)_1 = (A^Np_nu,v_n)_1,\quad
        \forall u\in H^1(\Omega), v_n\in V_n,
    \end{equation*}
    which implies $q_nA^N = A^Np_n$.
    Therefore, $A^N_n$ converges to $A^N$.
    We also have
    \begin{equation*}
        (q_nK^N(k)u_n,v_n)_1 = (K^N(k)u_n,v_n)_1 = (K^N_n(k)u_n,v_n)_1,
        \quad \forall u_n, v_n\in V_n.
    \end{equation*}
    Hence $q_n K^N(k)|_{V_n} = K_n^N(k)$.
    By Lemma \ref{lem2}, $B^N_n(k)$ converges regularly to $B^N(k)$.
\end{proof}

We are now ready to present the convergence of eigenvalues. 
\begin{theorem}
    Let $\Lambda_0$ be a compact subset of $\Lambda$ such that 
    $\partial \Lambda_0\subseteq \rho(B)$ and 
    $\sigma(B)\cap \Lambda_0 = \{\lambda\}$.
    Then for sufficiently large $N$ and $n$,
    there exist $\lambda^N\in \sigma(B^N)\cap \Lambda_0$ and 
    $\lambda^N_n\in \sigma(B^N_n)\cap \Lambda_0$ having the same
    multiplicity as $\lambda$, such that
    \begin{align*}
        |\lambda^N - \lambda|\leq C 
        \left(\sup_{\substack{u\in G(\lambda)\\\Vert u\Vert_1 = 1}}
        \gamma_{1/2}(N,u) + \frac{1}{\sqrt{N}}\right)^{1/r},
        \\
        |\lambda^N_n - \lambda^N|\leq C 
        \sup_{\substack{u\in G(\lambda^N)\\\Vert u\Vert_1 = 1}}
        \Vert u - p_n u\Vert_{1/2+\epsilon}^{1/r_N},
    \end{align*}
    where $r$ and $r_N$ are respectively the maximum length of Jordan chain of $\lambda$ and $\lambda^N$,
    $G(\lambda)$ and $G(\lambda^N)$ are the respective generalized eigenspace
    of $\lambda$ and $\lambda^N$ (finite-dimensional),
    and $C$ depends on $\Lambda_0$ but independent on $N$ or $n$.
\end{theorem}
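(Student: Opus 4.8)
The plan is to apply the abstract convergence result Theorem~\ref{KarmaThm} twice: once to pass from $B(\cdot)$ to the truncated family $B^N(\cdot)$, and once to pass from $B^N(\cdot)$ to the fully discrete family $B^N_n(\cdot)$, and then to insert the explicit operator-difference bounds already produced in the regular-convergence lemmas. In both applications the two structural hypotheses of the abstract theory, equiboundedness on compact subsets and regular convergence, are in hand: equiboundedness is Lemma~\ref{lemma1}, the Fredholm/holomorphy/nonempty-resolvent requirements are Lemma~\ref{holomorphicB} and Lemma~\ref{holomorphic}, and the two regular convergences are Lemma~\ref{regular N} and Lemma~\ref{regular n}. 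Thus the whole argument reduces to identifying the quantity $\epsilon_n$ appearing in Theorem~\ref{KarmaThm} in each case.

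For the first estimate I would take $X = Y = X^N = Y^N = H^1(\Omega)$ with $p^N = q^N$ the identity, so that $B^N(\cdot)$ plays the role of $F_n(\cdot)$. By Lemma~\ref{lemma1} and Lemma~\ref{regular N} the hypotheses of Theorem~\ref{KarmaThm} hold, and it yields an eigenvalue $\lambda^N \in \sigma(B^N) \cap \Lambda_0$ with the same multiplicity as $\lambda$ and $|\lambda^N - \lambda| \le C\epsilon_N^{1/r}$, where $\epsilon_N = \sup \|(B^N(k) - B(k))u\|_1$ over $k \in \partial\Lambda_0$ and $u \in G(\lambda)$, $\|u\|_1 = 1$. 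Writing $B^N(k) - B(k) = (A^N - A) - (K^N(k) - K(k))$ and reusing the two bounds established inside the proof of Lemma~\ref{regular N}, namely $\|(A - A^N)u\|_1 \le C\gamma_{1/2}(N,u)\|u\|_1$ and $\|(K(k) - K^N(k))u\|_1 \le C N^{-1/2}\|u\|_1$ uniformly for $k$ in the compact set $\partial\Lambda_0$, gives $\epsilon_N \le C(\sup_u \gamma_{1/2}(N,u) + N^{-1/2})$ and hence the first claimed bound.

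For the second estimate I would fix $N$ large and first observe that, since $B^N(k)$ converges regularly to $B(k)$, it is invertible with uniformly bounded inverse on the compact set $\partial\Lambda_0 \subseteq \rho(B)$ for $N$ large; together with the absence of cluster points of $\sigma(B^N)$ this guarantees $\partial\Lambda_0 \subseteq \rho(B^N)$ and $\sigma(B^N) \cap \Lambda_0 = \{\lambda^N\}$ for such $N$, so Theorem~\ref{KarmaThm} may be applied again with $F = B^N$, $F_n = B^N_n$, and the projections $p_n$ (the $A^N$-projection) and $q_n$ (the $H^1$-projection) introduced before Lemma~\ref{regular n}. This produces $\lambda^N_n \in \sigma(B^N_n) \cap \Lambda_0$ of the same multiplicity as $\lambda^N$ with $|\lambda^N_n - \lambda^N| \le C\epsilon_n^{1/r_N}$, where now $\epsilon_n = \sup \|B^N_n(k)p_n u - q_n B^N(k)u\|_1$ over $k \in \partial\Lambda_0$ and $u \in G(\lambda^N)$, $\|u\|_1 = 1$.

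The crux is to simplify this consistency error. Using the two identities recorded in the proof of Lemma~\ref{regular n}, $A^N_n p_n = q_n A^N$ and $K^N_n(k) = q_n K^N(k)|_{V_n}$, together with the splittings $B^N_n = A^N_n - K^N_n$ and $B^N = A^N - K^N$, the $A^N$-contributions cancel and one is left with
\begin{equation}\nonumber
B^N_n(k)p_n u - q_n B^N(k)u = q_n K^N(k)(u - p_n u).
\end{equation}
Since $q_n$ is an $H^1$-orthogonal projection, $\|q_n\| \le 1$, and since the extension of $K^N(k)$ to $H^{1/2+\epsilon}(\Omega)$ is bounded as noted after the definition of $K^N(k)$, I would bound the right-hand side by $C\|u - p_n u\|_{1/2+\epsilon}$. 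Taking the supremum over the finite-dimensional generalized eigenspace gives $\epsilon_n \le C\sup_u \|u - p_n u\|_{1/2+\epsilon}$ and therefore the second claimed bound. The main obstacle is precisely this last cancellation: one must exploit that $K^N(k)$ smooths, being bounded on the weaker space $H^{1/2+\epsilon}(\Omega)$, so that the consistency error is controlled by $\|u - p_n u\|_{1/2+\epsilon}$ rather than by $\|u - p_n u\|_1$, which is what yields the improved discretization rate; verifying $\partial\Lambda_0 \subseteq \rho(B^N)$ so that the second invocation of Theorem~\ref{KarmaThm} is legitimate is a secondary but necessary point.
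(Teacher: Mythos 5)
Your proposal is correct and follows essentially the same route as the paper: two applications of Theorem~\ref{KarmaThm}, with the consistency error $\epsilon_N$ bounded by the estimates from Lemma~\ref{regular N} and the consistency error $\epsilon_n$ reduced, via the identities $A^N_n p_n = q_n A^N$ and $K^N_n(k) = q_n K^N(k)|_{V_n}$, to $\Vert q_n K^N(k)(u-p_n u)\Vert_1 \leq C\Vert u - p_n u\Vert_{1/2+\epsilon}$ using the boundedness of $K^N(k)$ on $H^{1/2+\epsilon}(\Omega)$. Your explicit verification that $\partial\Lambda_0\subseteq\rho(B^N)$ before the second invocation is a detail the paper leaves implicit, but it does not change the argument.
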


\begin{proof}
    By Lemma \ref{regular N}, it holds that
    \begin{align*}
        \Vert q^N B(k)u - B^N(k)p^Nu\Vert_1
        &\leq 
        \Vert q^N Au - A^Np^Nu\Vert_1 + \Vert q^N K(k)u - K^N(k)p^N u\Vert_1
        \\
        &\leq C\gamma_{1/2}(N,u)\Vert u\Vert_1 
        + \frac{C}{\sqrt{N}}\Vert u\Vert_1.
    \end{align*}
    By Lemma \ref{regular n}, we have that
    \begin{align*}
        &\Vert q_n B^N(k)u - B^N_n(k)p_nu\Vert_1 
        = \Vert q_n K^N(k)u - K^N_n(k)p_nu\Vert_1
        \\
        \leq\;& \Vert q_n\Vert \Vert K^N(k)\Vert_
        {H^{1/2+\epsilon}(\Omega)\rightarrow H^{1}(\Omega)}
        \Vert u - p_nu\Vert_{1/2+\epsilon}
        \leq C\Vert u - p_n u\Vert_{1/2+\epsilon}.
    \end{align*}
    The proof is complete by applying Theorem \ref{KarmaThm}.
\end{proof}

\begin{remark}
If the functions in $G(\lambda)$ and $G(\lambda^N)$ belong to $H^{3/2-t}(\Omega)$ for some small $t>0$ and $\Vert u - p_n u\Vert_0\leq Ch_n\Vert u\Vert_1$ for all $u\in G(\lambda^N)$, then 
\begin{equation*}
        |\lambda^N - \lambda|\leq \frac{C}{N^{(1/2-t)/r}},
        \quad
        |\lambda^N_n - \lambda^N|\leq Ch_n^{(1-\epsilon-t)/r_N}.
\end{equation*}
\end{remark}

\begin{remark}
For each converging sequence of eigenvalues $\lambda^N_n$ of $B^N_n(\cdot)$, the limit is an eigenvalue of $B(\cdot)$ \cite{karma1996a}. In addition, the convergence of the eigenfunctions can also be obtained \cite{Beyn}.    
\end{remark}

\section{Numerical Examples}\label{NE}
We present numerical results for three obstacles to demonstrate the effectiveness of the proposed finite element DtN method. Assume that the basis functions of $V_n$ are  $\phi_i,\ i=1,\cdots,N_n$. Let $S^1$, $S^2$, $S^3(k)$ be the matrices corresponding to
\begin{equation}\label{Continuous_matrix}\nonumber
(\nabla u_n,\nabla v_n),\; (u_n,v_n),\; \langle T^N(k)u_n,v_n\rangle_{\Gamma_R},
\end{equation}
respectively. The matrix version of the operator eigenvalue problem for (\ref{DtN-NEP}) is to find $k\in\mathbb{C}$ and a nontrivial $u_n\in V_n$ such that
\begin{equation}\label{matrix-form}
B_{n}^N(k)u_n:=(S^1-k^2S^2-S^3(k))u_n=0.
\end{equation}
The initial mesh $\mathcal{T}_{h_1}$ for $\Omega$ has the mesh size $h_1=\frac{\pi}{25}$. Then we uniformly refine $\mathcal{T}_{h_1}$ to obtain $\mathcal{T}_{h_2}$ etc. The relative error and the convergence order are denoted, respectively, by
$$
E_j=\frac{|k^{j}-k^{j+1}|}{|k^{j+1}|},\ \ \ j=1,2,3,4,
$$
and
$$
\log_2\left(\frac{E_{j}}{E_{j+1}}\right),\ \ \ j=1,2,3.
$$

The matrix eigenvalue problem \eqref{matrix-form} is nonlinear, which is an important topic in the numerical linear algebra community. Many methods such as iterative techniques and linearization algorithms have been investigated. Recently, the contour integral methods have become popular \cite{Asakura2009, Beyn2012,HuangSun,BrennanEtal2023}. Among them, the spectral indicator method (SIM) is very simple and easy to implement \cite{HuangSun,Gong2022MC}. It can be used as a standalone eigensolver or a screening tool for the distribution of eigenvalues. Two parallel spectral indicator methods were proposed in \cite{XiSun2023} to compute all the eigenvalues inside a given region on the complex plane. They do not require any prior knowledge of the number and distribution of the eigenvalues. The robustness are achieved by carefully choosing the parameters and adding a validation step. The parallel nature guarantees the high efficiency. In this paper, we employ the parallel SIM in \cite{XiSun2023} to compute all the eigenvalues of \eqref{matrix-form} in $S=[0,4]\times[-4,0]$ on the complex plane.

For the DtN mapping, we take a fixed truncation order $N=20$. The truncation order certainly affects the accuracy but $20$ is large enough for the scattering poles in $S$. The numerical experiments in \cite{Hsiao2011} show that the convergence rate of $T^N$ is very fast with respect to $N$ and the rate decreases when $kR$ increases.

\subsection{Unit Disk}
Let the obstacle $D$ be the unit disk centered at the origin. The exact scattering poles can be found analytically. The solution in polar coordinates to the scattering problem \eqref{NeumannBCP}-\eqref{SRC} can be written in the form
\begin{equation}\label{scattering}
u(r,\theta):=\sum_{m=-\infty}^\infty\alpha_m H_m^{(1)}(kr)e^{im\theta}.
\end{equation}
Assume that the incident wave $u^{\rm inc}$ is given by
\begin{equation}\label{incident}
u^{\rm inc}(r,\theta)=\sum_{m=-\infty}^\infty\beta_m J_m(kr)e^{im\theta}.
\end{equation}
Then $g(r,\theta) = -\frac{\partial u^{\rm inc}}{\partial \nu}$. Using \eqref{scattering} to match $\partial u(r, \theta)/\partial \nu$ and $g(r,\theta)$ on $\partial D$ ($r=1$), one has that
\begin{equation}
\alpha_m=-\frac{\beta_mJ'_m(k)}{H_m^{(1)'}(k)}.
\end{equation}
Hence the scattering poles for the sound hard unit disk are the zeros of $H_m^{(1)'}(k)$.

We take $R=1.25$ for $\Gamma_R$. The computed scattering poles on the finest mesh ($h_5$) are shown in Figure \ref{fig1}, which are consistent with the exact values.
\begin{figure}
\centering
\includegraphics[width=0.6\textwidth]{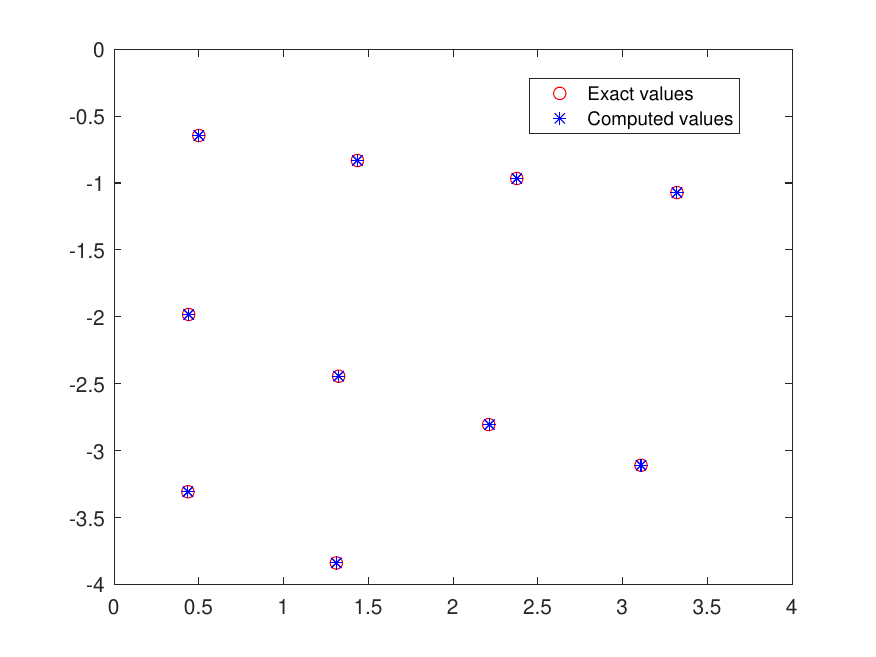}
\caption{The computed scattering poles and exact poles in $[0,4]\times[-4,0]$ for the unit disk.}
\label{fig1}
\end{figure}
The three small (in norm) scattering poles on different meshes are shown in Table \ref{table1}. Note that all the poles have approximately 2nd order of convergence. The associated eigenfunctions are shown in Figure \ref{fig-efs-1}.
\begin{table}
\begin{center}
\caption{Three small scattering poles and their convergence orders for the unit disk.}
\medskip
\label{table1}
\begin{tabular}{ccccccc}
\hline
 &$k_1$&Ord& $k_2$&Ord&$k_3$&Ord\\
\hline
$h_1$   &0.501344-0.644051i&       &1.435748-0.836362i&       &0.443441-1.989728i&    \\
$h_2$   &0.501225-0.643672i&       &1.434768-0.834997i&       &0.441463-1.983656i&    \\
$h_3$   &0.501194-0.643577i&2.0 &1.434521-0.834659i&2.0 &0.440967-1.982130i&2.0\\
$h_4$   &0.501186-0.643553i&2.0 &1.434459-0.834574i&2.0 &0.440842-1.981746i&2.0\\
$h_5$   &0.501184-0.643547i&2.0 &1.434443-0.834553i&2.0 &0.440810-1.981650i&2.0\\
\hline
\end{tabular}
\end{center}
\end{table}

\begin{figure}[ht]
\begin{center}
\begin{tabular}{ccc}
\resizebox{0.3\textwidth}{!}{\includegraphics{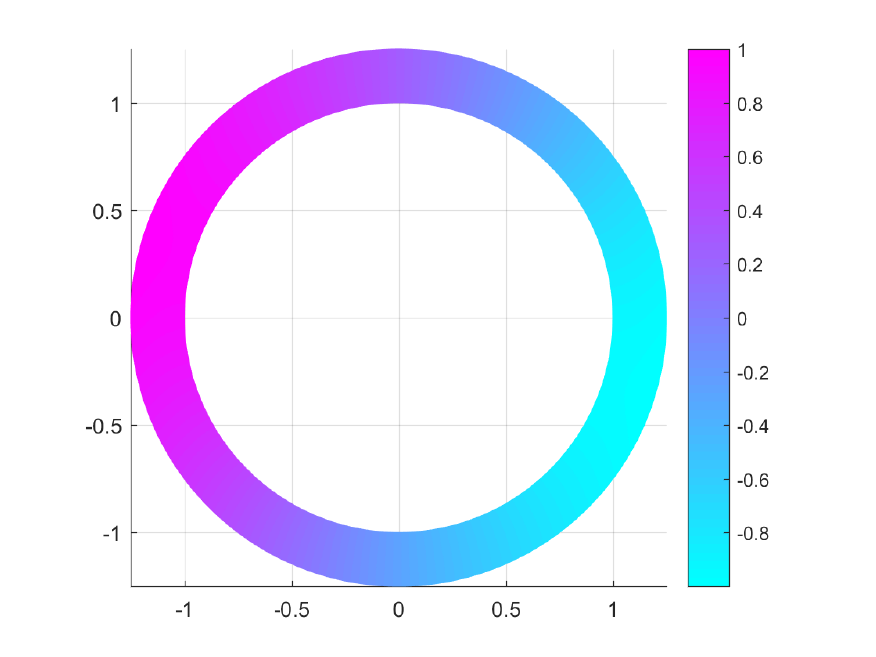}}&
\resizebox{0.3\textwidth}{!}{\includegraphics{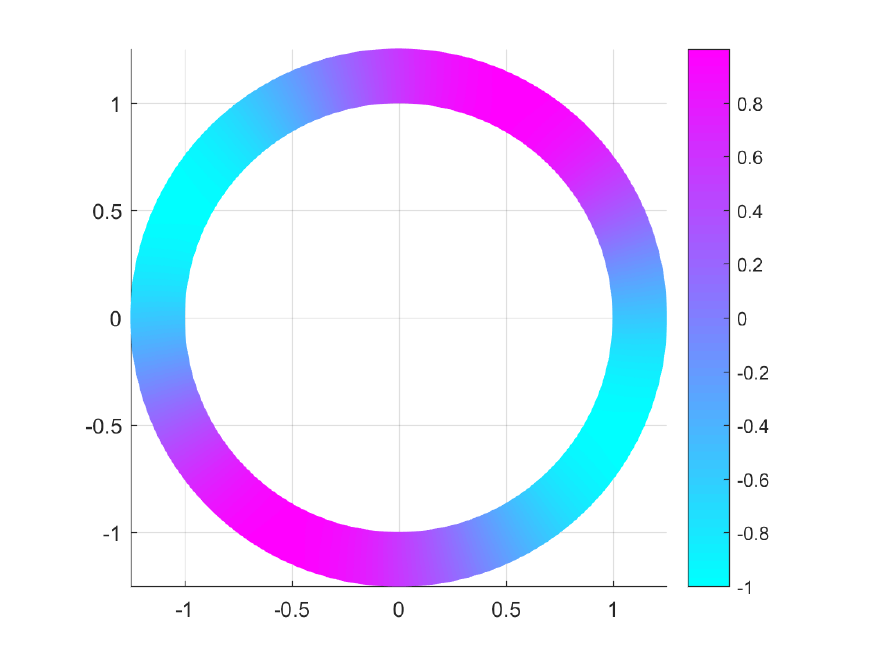}}&
\resizebox{0.3\textwidth}{!}{\includegraphics{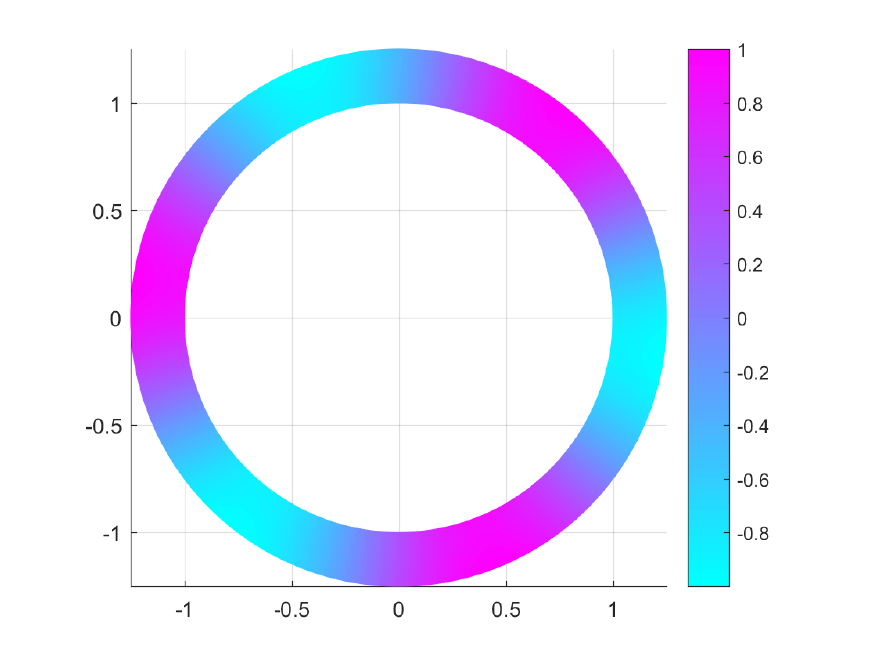}}
\end{tabular}
\end{center}
\caption{Real parts of the eigenfunctions associated with the three small scattering poles in {\bf Example 1}. }
\label{fig-efs-1}
\end{figure}

\subsection{Unit square}
\begin{table}
\begin{center}
\caption{Three small scattering poles and their convergence orders for the unit square.}
\medskip
\label{table2}
\begin{tabular}{ccccccc}
\hline
 &$k_1$&Ord& $k_2$&Ord&$k_3$&Ord\\
\hline
$h_1$   &0.880423-1.100515i&       &2.438825-1.054317i&    &2.409552-1.758703i&       \\
$h_2$   &0.881354-1.097152i&       &2.424593-1.034707i&    &2.412677-1.761034i&       \\
$h_3$   &0.881456-1.095512i&1.1 &2.418661-1.027046i&1.3&2.413375-1.761636i&2.1 \\
$h_4$   &0.881432-1.094788i&1.2 &2.416234-1.024029i&1.3&2.413536-1.761785i&2.1 \\
$h_5$   &0.881407-1.094483i&1.2 &2.415253-1.022837i&1.3&2.413574-1.761821i&2.1 \\
\hline
\end{tabular}
\end{center}
\end{table}

Let $D$ be the unit square given by $(-1/2,1/2)\times (-1/2,1/2)$ and $R=0.85$ for $\Gamma_R$. Eight scattering poles are found in $S$. The values on the finest mesh $\mathcal{T}_{h_5}$ are:
\[
\begin{array}{llll}
  0.881407 - 1.094483i,&
  0.881408 - 1.094491i,&
  2.415253 - 1.022837i,&
  2.413574 - 1.761821i,\\
  0.753148 - 3.384440i,&
  0.753157 - 3.384468i,&
  3.962260 - 1.585959i,&
  3.962286 - 1.585985i.
\end{array}
\]
Three small computed scattering poles are shown in Table \ref{table2}. The convergence order is less than~$2$. Note that $\Omega$ is the exterior to the square and has reentrant corners.
The real parts of the eigenfunctions are shown in Figure \ref{fig-efs-2}.
\begin{figure}[ht]
\begin{center}
\begin{tabular}{ccc}
\resizebox{0.3\textwidth}{!}{\includegraphics{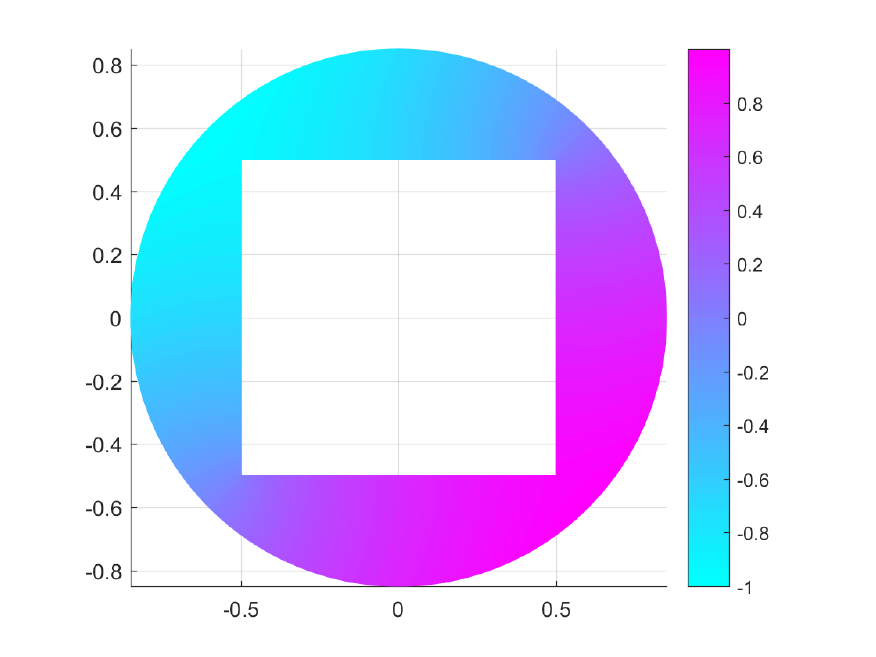}}&
\resizebox{0.3\textwidth}{!}{\includegraphics{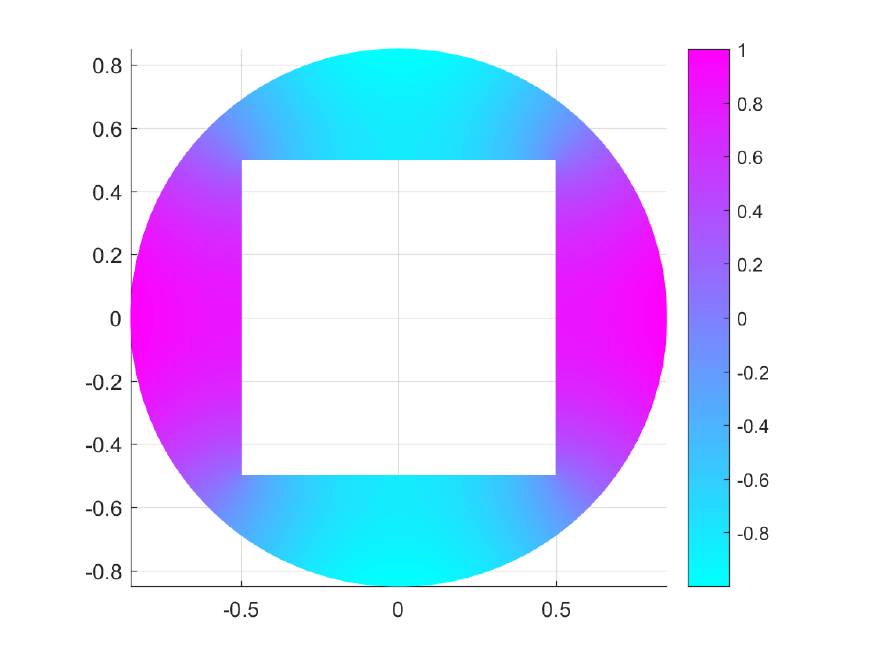}}&
\resizebox{0.3\textwidth}{!}{\includegraphics{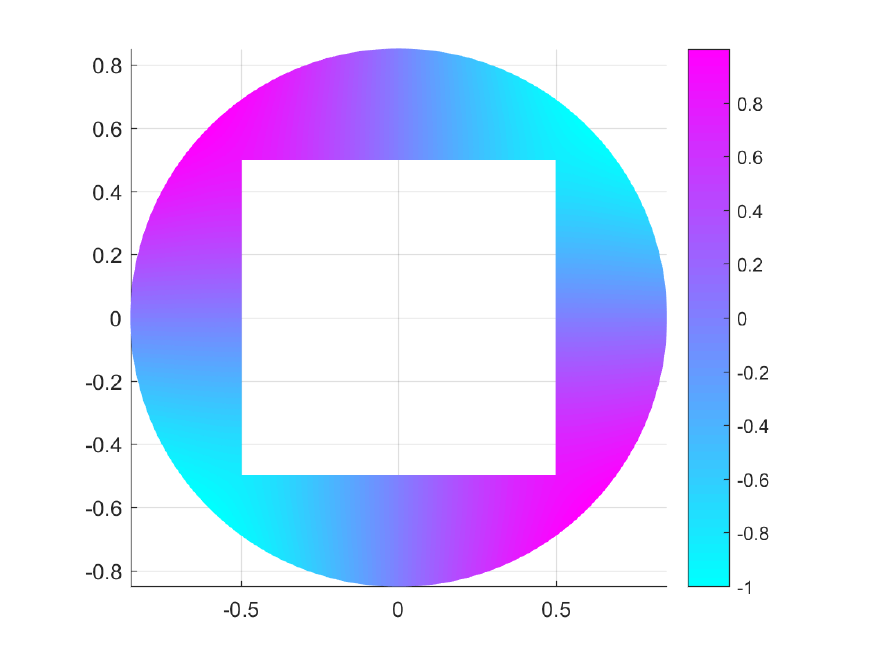}}
\end{tabular}
\end{center}
\caption{Real parts of the eigenfunctions associated with thethree small scattering poles in {\bf Example 2}. }
\label{fig-efs-2}
\end{figure}

\subsection{L-shaped Domain}
Let $D$ be the L-shapded domain given by $(-1/2,1/2)\times(-1/2,1/2)\backslash[0,1/2]\times[0,1/2]$. We take $R=0.85$ for $\Gamma_R$.  The computed poles in $S$ are as follows
\[
\begin{array}{llll}
1.061307-1.097954i,&0.916086-1.292350i,&2.594561-1.323668i,&2.663826-1.362413i,\\
0.824321-3.733939i,&0.873533-3.726105i,&3.896025-1.388263i.
\end{array}
\]
The convergence behavior of the first three computed scattering poles are presented in Table \ref{table3}. The real part of the eigenfunctions for the first three scattering poles are shown in Figure \ref{fig-efs-3}.
\begin{table}
\begin{center}
\caption{Three small scattering poles and their convergence orders for the L-shaped domain.}
\medskip
\label{table3}
\begin{tabular}{ccccccc}
\hline
 &$k_1$&Ord& $k_2$&Ord&$k_3$&Ord\\
\hline
$h_1$   &1.060612-1.107405i&       &0.914365-1.302247i&    &2.614854-1.363921i&       \\
$h_2$   &1.061650-1.101887i&       &0.915794-1.296528i&    &2.603012-1.338834i&       \\
$h_3$   &1.061557-1.099426i&1.2 &0.916057-1.293925i&1.2&2.597730-1.329038i&1.3 \\
$h_4$   &1.061399-1.098384i&1.2 &0.916090-1.292811i&1.2&2.595487-1.325187i&1.3 \\
$h_5$   &1.061307-1.097954i&1.3 &0.916086-1.292350i&1.3&2.594561-1.323668i&1.3 \\
\hline
\end{tabular}
\end{center}
\end{table}

\begin{figure}[ht]
\begin{center}
\begin{tabular}{ccc}
\resizebox{0.3\textwidth}{!}{\includegraphics{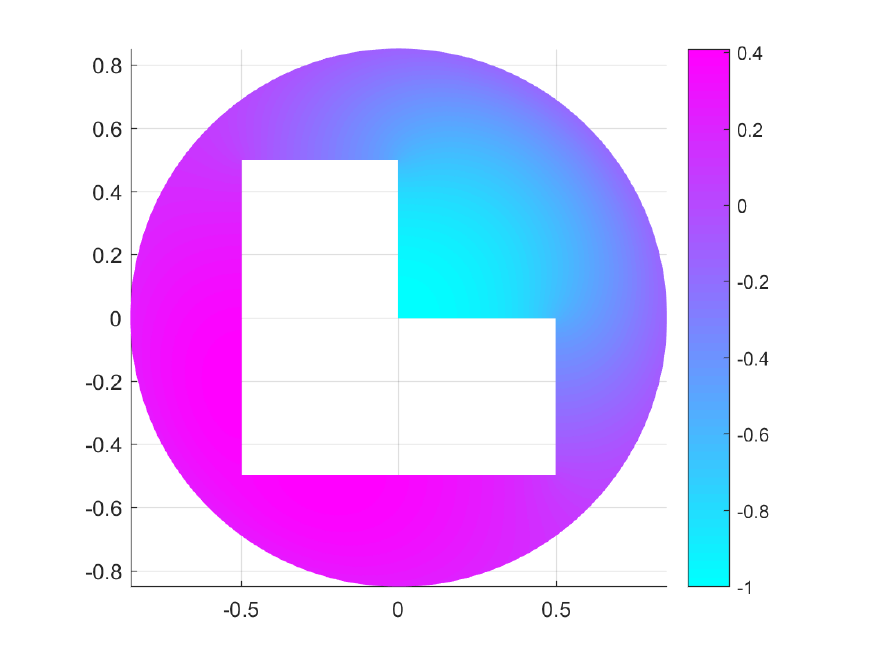}}&
\resizebox{0.3\textwidth}{!}{\includegraphics{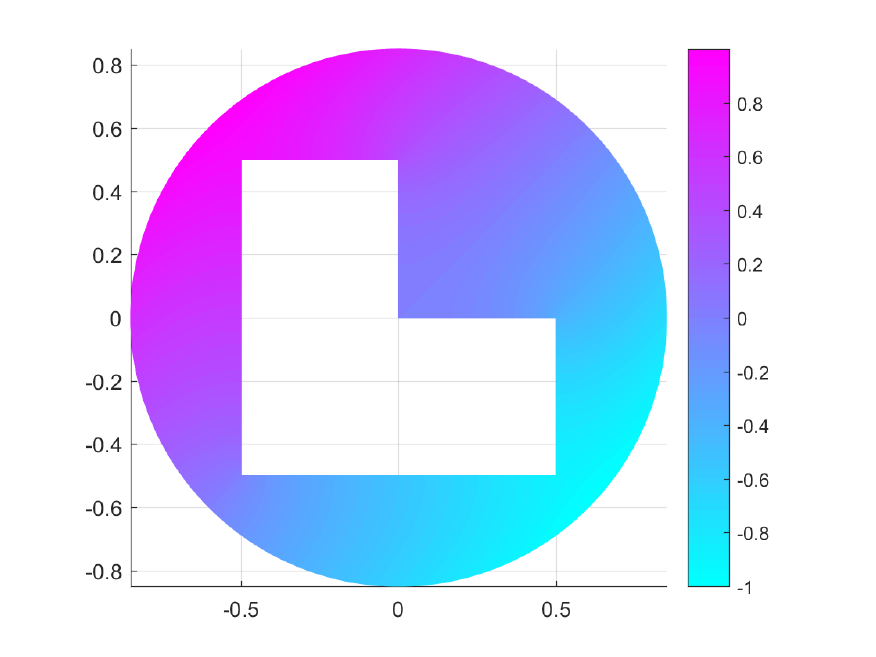}}&
\resizebox{0.3\textwidth}{!}{\includegraphics{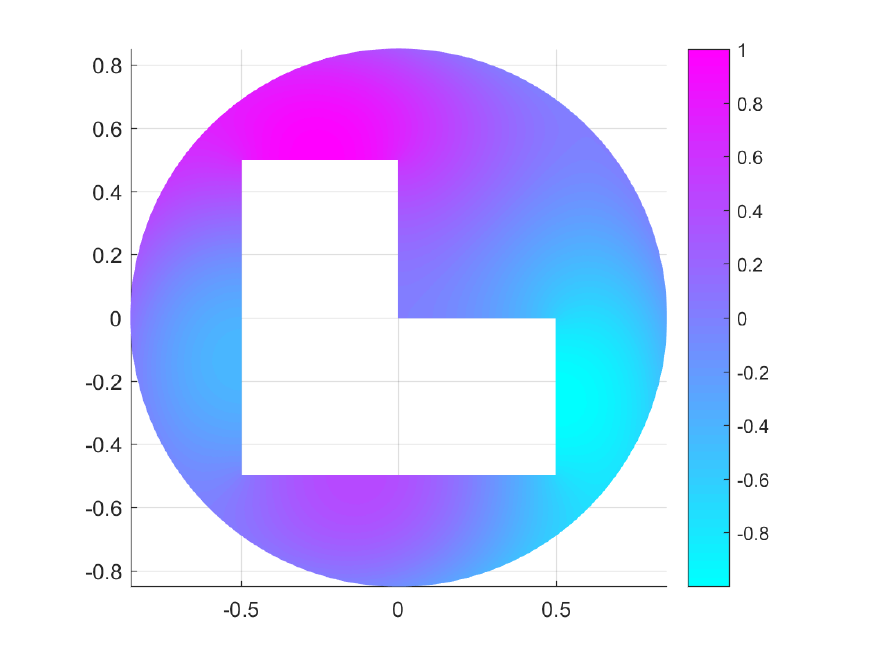}}
\end{tabular}
\end{center}
\caption{Real parts of the eigenfunctions for the smallest three scattering poles of the L-shaped domain.}
\label{fig-efs-3}
\end{figure}

\section{Conclusions and Future Work}\label{CF}
We propose a finite element DtN method for the computation of scattering resonances of sound hard obstacles. The convergence is proved based on the abstract approximation theory for holomorphic Fredholm opeartor functions. Numerical results for three benchmark problems are presented.

It is highly nontrivial to develop effective numerical methods for scattering resonances. The numerical results indicate that  
\[
\text{finite element} + \text{DtN mapping} + \text{parallel SIM} 
\]
provides a practical framework, which can be used to compute scattering poles of other obstacles, inhomogeneous medium, and electromagnetics scattering problems.

\section*{Acknowledgement}
Y. Xi is partially supported by the National Natural Science Foundation of China with Grant No.11901295, No.12371440 and Natural Science Foundation of Jiangsu Province under BK20190431. B. Gong is supported by National Natural Science Foundation of China No.12201019. J. Sun is partially supported by an NSF Grant DMS-2109949 and a Simons Foundation Collaboration Grant 711922.

\bibliography{references}

\end{document}